\documentclass[11pt]{amsart}
\usepackage[usenames]{color}
\usepackage{amsmath,amssymb,amsthm,amsfonts}
\usepackage[colorlinks=true,
linkcolor=webgreen,
filecolor=webbrown,
citecolor=webgreen]{hyperref}
\usepackage{graphicx}
\usepackage{epstopdf}
\DeclareGraphicsRule{.tif}{png}{.png}{`convert #1 `dirname #1`/`basename #1 .tif`.png}
\usepackage{url}
\usepackage{comment}
\usepackage[normalem]{ulem}
\usepackage{tikz}

\definecolor{webgreen}{rgb}{0,.5,0}
\definecolor{webbrown}{rgb}{.6,0,0}

\newtheorem{theorem}{Theorem}[section]
\newtheorem{lemma}[theorem]{Lemma}
\newtheorem{prop}[theorem]{Proposition}
\newtheorem*{prop*}{Proposition}
\newtheorem{cor}[theorem]{Corollary}
\newtheorem*{cor*}{Corollary}
\newtheorem{corollary}[theorem]{Corollary}

\theoremstyle{definition}
\newtheorem{definition}[theorem]{Definition}
\newtheorem{example}[theorem]{Example}

\newtheorem{remark}[theorem]{Remark}
\newtheorem*{remark*}{Remark}
\newtheorem*{example*}{Example}

\newcommand{\thistheoremname}{}
\newtheorem*{genericthm*}{\thistheoremname}
\newenvironment{namedthm*}[1]
{\renewcommand{\thistheoremname}{#1}
\begin{genericthm*}}
{\end{genericthm*}}

\newcommand{\lk}{{\rm lk}}

\numberwithin{equation}{section}
\title{Ample Simplicial Complexes}

\author {Chaim Even-Zohar}
\address{Alan Turing Institute\\
 London, UK}
\email{cevenzohar@turing.ac.uk}
\thanks{C. Even-Zohar was partially supported by the Lloyd Register Foundation and the Data Centrick Engineering Programme of the Alan Turing Institute}

\author{Michael Farber}
\address{School of Mathematical Sciences \\
Queen Mary University of London\\
London, E1 4NS, 
United Kingdom}
\email{m.farber@qmul.ac.uk}
\thanks{M. Farber was partially supported by a grant from the Leverhulme Foundation}

\author {Lewis Mead}
\address{School of Mathematical Sciences \\
Queen Mary University of London\\
London, E1 4NS, 
United Kingdom}
\email{lewis.mead@qmul.ac.uk}
\date{\today}

\begin{document}
\maketitle
\begin{abstract}
Motivated by potential applications in network theory, engineering and computer science, we study $r$-ample simplicial complexes. These complexes can be viewed as finite approximations to the Rado complex which has a remarkable property of {\it indestructibility,}  in the sense that removing any finite number of its simplexes leaves a complex isomorphic to itself. 
We prove that an $r$-ample simplicial complex is simply connected and $2$-connected for $r$ large. 
The number $n$ of vertexes of an $r$-ample simplicial complex satisfies $\exp(\Omega(\frac{2^r}{\sqrt{r}}))$.
We use the probabilistic method to establish the existence of $r$-ample simplicial complexes with $n$ vertexes for any 
 $n>r 2^r 2^{2^r}$. 
Finally, we introduce {\it the iterated Paley simplicial complexes}, which are explicitly constructed $r$-ample simplicial complexes with nearly optimal number of vertexes. 

\end{abstract}

\section{Introduction}

Modern network science uses simplicial complexes of high dimension for modelling complex networks consisting of a vast number of interacting entities: while pairwise interactions can be recorded by a graph, the higher order interactions between the entities require the language of simplicial complexes of dimension greater than $1$. We refer the reader to a recent survey ~\cite{battison} with many interesting references including
applications to social systems, neuroscience, ecology, and to 
biological sciences.

Viewing simplicial complexes as representing networks raises new interesting 
questions about their geometry and topology.  
Motivated by this viewpoint, we study in this paper a special class of simplicial complexes representing {\lq\lq stable and resilient\rq\rq} networks, in the sense that small alterations of the network have limited impact on its global properties (such as connectivity and high connectivity). 

In \cite{farber2019rado} the authors investigated a remarkable simplicial complex $X$ (called {\it the Rado complex}) which is 
{ \lq\lq totally  indestructible\rq\rq } in the following sense: 
removing any finite number of simplexes of $X$ leaves a simplicial complex {\it isomorphic} to $X$. 
Unfortunately, $X$ has infinite (countable) number of vertexes and cannot be practically implemented. 
In the present paper we study finite simplicial complexes which can be viewed as approximations to the Rado complex $X$. We call such complexes $r$-ample, where $r\ge 1$ is an integer characterising the depth or level of ampleness. 
The Rado complex is the only simplicial complex on countably many vertexes which is $\infty$-ample.  The finite simplicial complexes which we study in this paper have a limited amount of ampleness and a limited amount of indestructibility. 
The formal definition of $r$-ampleness requires the existence of all possible extensions of simplicial subcomplexes of  size at most $r$.  

We believe that $r$-ample finite simplicial complexes, with $r$ large, can serve as models for very resilient networks in real life applications. 

One of the results of \cite{farber2019rado} states that the geometric realisation of the Rado complex is homeomorphic to the infinite dimensional simplex and hence it is contractible. A related mathematical object is {\it the medial regime random simplicial complex} \cite{farber2020random} which, as we show in this paper, is $r$-ample, with probability tending to one. 

It was proven in  \cite{farber2020random} that the medial regime random simplicial complex is simply connected and 
has vanishing Betti numbers in dimensions $\le \ln \ln n.$ For these reasons one expects that any $r$-ample simplicial complexes is highly connected, for large $r$. This question is discussed below in \S \ref{4}. 
 
Analogues of the ampleness property of this paper have been studied for graphs, hypergraphs, tournaments, and other structures, in combinatorics and in mathematical logic.
In the literature a variety of terms have been used: 
\emph{$r$-existentially completeness}, \emph{$r$-existentially closedness}, \emph{$r$-e.c.}~for short~\cite{cherlin1993combinatorial, bonato2009search}, and also the \emph{Adjacency Axiom $r$} \cite{blass1979properties, blass1981paley},  an \emph{extension property} \cite{fagin1976probabilities},  \emph{property $P(r)$} \cite{bollobas1985random, caccetta1985property}. 
This property intuitively means that you can get anything you want, it is also referred to as the \emph{Alice's Restaurant Axiom} \cite{winkler1993random, spencer1993zero}, and sometimes just as \emph{random}. 
Here we use the term \emph{$r$-ample}. 

The plan of this paper is as follows. In \S \ref{defs} we give the main definition and discuss several examples. 

In \S \ref{3} we discuss the resilience properties of $r$-ample complexes; our main result, Theorem \ref{remove1}, gives a bound on 
the family of simplexes $\mathcal F$ such that removing $\mathcal F$ reduces the level of ampleness by at most $k$. 
A significant role in this estimate plays the Dedekind number which equals the number of simplicial complexes on $k$ vertexes; good asymptotic approximations for the Dedekind number are known, see \S \ref{defs}.

In the following section \S \ref{4} we show that $r$-ample simplicial complexes are simply connected and 2-connected, for suitable values of $r$. Note that the Rado complex is contractible and hence one expects that any $r$-ample complex is $k$-connected for $r>r(k)$, for some $r(k)<\infty$. We do not know if this is true in general, however we are able to analyse the cases $k=1$ and $k=2$. 

In \S \ref{5} we construct ample simplicial complexes using the probabilistic method. We show that 
for every $r \geq 5$ and for any $n \ge  r2^r2^{2^{r}}$, there exists an $r$-ample simplicial complex having exactly $n$ vertexes, see Proposition \ref{proba}. 


In the final section 6 we construct an explicit family, in the spirit of Paley graphs, of r-ample simplicial complexes on 
$\exp(O(r2^r))$ vertices. We do not know if this construction gives smaller r-ample complexes than in our probabilitic existence proof.
The lower bound $\exp(\Omega(2^r/\sqrt{r}))$ follows from Corollary \ref{lower}. 
The construction of this section uses sophisticated tools of number theory. 

\section{Definitions and first examples}
\label{defs}

First we fix our notations and terminology. The symbol $V(X)$ denotes the set of vertices of a simplicial complex $X$. 
If $U\subseteq V(X)$ is a subset we denote by $X_U$ the {\it induced subcomplex} on $U$, i.e., $V(X_U)=U$ and a set of points of $U$ forms a simplex in $X_U$ if and only if it is a simplex in $X$. 

An {\it embedding} of a simplicial complex $A$ into $X$ is an isomorphism between $A$ and an induced subcomplex of $X$.

 The \emph{join} of simplicial complexes $X$ and $Y$ is denoted $X \ast Y$; recall that the set of vertexes of the join is $V(X)\sqcup V(Y)$ and the simplexes of the join are simplexes of the complexes $X$ and $Y$ as well as simplexes of the form
 $\sigma\tau=\sigma\ast\tau$ where $\sigma$ is a simplex in $X$ and $\tau$ is a simplex in $Y$. 
 The simplex $\sigma\tau=\sigma\ast\tau$ has as its vertex set the union of vertexes of $\sigma$ and of $\tau$. 
 The symbol $CX$ stands for the \emph{cone} over $X$. For a vertex $v\in V(X)$ the symbol $\lk_X(v)$ denotes the {\it link} of $v$ in $X$, i.e., the subcomplex of $X$ formed by all simplexes $\sigma$ which do not contain $v$ but such that $\sigma v=\sigma\ast v$ is a simplex of $X$. 

Here is our main definition.

\begin{definition}
\label{first}
\label{def:ample}
Let $r\ge 1$ be an integer. A nonempty simplicial complex $X$ is said to be {\it $r$-ample} if for each subset $U\subseteq V(X)$ with $|U|\le r$ and for each subcomplex $A\subseteq X_U$ there exists a vertex $v\in V(X) - U$ such that 
\begin{eqnarray}\label{11a}
{\rm lk}_X(v)\cap X_U \;=\; A.
\end{eqnarray} 
We say that $X$ is {\it ample or $\infty$-ample} if it is $r$-ample for every $r\ge 1$. 
\end{definition}

The condition (\ref{11a}) 
can equivalently be expressed as 
\begin{eqnarray}\label{12}
X_{U\cup \{v\}} \;=\; X_U \cup (v\ast A). 
\end{eqnarray}

Obviously, no finite simplicial complex can be $\infty$-ample. 
In ~\cite{farber2019rado} it was shown that {\it there exists a unique, up to isomorphism, $\infty$-ample simplicial complex $X$ on countably many vertices}.
The 1-skeleton of $X$ is the well known Rado graph
~\cite{erdHos1963asymmetric, rado1964universal, cameron1997random}. 
The complex $X$ (called {\it the Rado complex} in \cite{farber2019rado}) possesses remarkable stability properties, for example removing from $X$ any finite set of simplexes leaves a simplicial complex isomorphic to $X$.

Later in this paper we show that  for any integer $r$ there exist finite $r$-ample simplicial complexes and 
we give probabilistic as well as explicit deterministic constructions. 

It is clear that $r$-ampleness depends only on the $r$-dimensional skeleton. 

To be 1-ample a simplicial complex must have no isolated vertexes and no vertexes connected to all other vertexes. 

A 1-ample complex has at least 4 vertexes and Figure \ref{fig:1ample} shows two such examples. 
\begin{figure}[h]
\centering
\includegraphics[scale = 0.4]{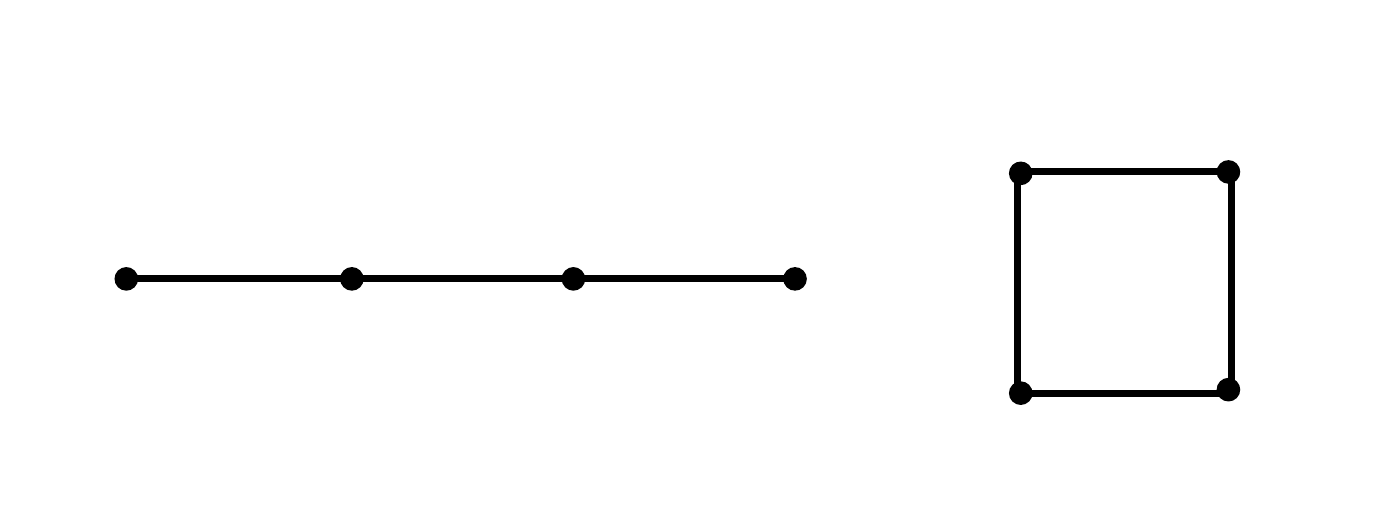}
\caption{1-ample complexes.}
\label{fig:1ample}
\end{figure}

A 2-ample complex is connected since for any pair of vertexes there must exist a vertex connected to both, i.e. the complex must have diameter $\le 2$. 
A 2-ample complex is also {\it twin-free} in the sense that no two vertexes have exactly the same link. 
The following example shows that a 2-ample simplicial complex may be not simply connected. 

\begin{example}
\label{thirteen} 
Consider a 2-dimensional simplicial complex $X$ having 13 vertexes labelled by integers $0, 1, 2, \dots, 12$. 
A pair of vertexes $i$ and $j$ is connected by an edge iff the difference $i-j$ is a square modulo 13, i.e. if
$$i-j \equiv \pm 1, \pm 3, \pm 4 \mod 13.$$ The 1-skeleton of $X$ is a well-known Paley graph of order 13. 
Next we add 13 triangles $$i, i+1, i+4, \quad \mbox{ where} \quad i=0, 1, \dots, 12.$$
We claim that the obtained complex $X$ is 2-ample. 
The verification amounts to the following: for any two vertices, there exists other ones adjacent to both, neither, only one, and only the other. 
Moreover, any edge lies both on a single filled and unfilled triangles. Indeed, an edge $i, i+1$ lies in the triangle $i, i+1, i+4$ (filled) as well as in the triangle $i-3, i, i+1$ (unfilled). Informally, the filled triangles can be characterised by the identity $1+3=4$ and the unfilled by $3+1=4$. 

We note that $X$ can be obtained from the triangulated torus with 13 vertexes, 39 edges and 26 triangles (see Figure
\ref{torus}) by removing 13 white triangles of type 
$i, i+3, i+4$. 
From this description it is obvious that $X$ collapses onto a graph and calculating the Euler characteristic we find $b_0(X)=1,$ $b_1(X)= 14$ and $b_2(X)=0$. 

\begin{figure}
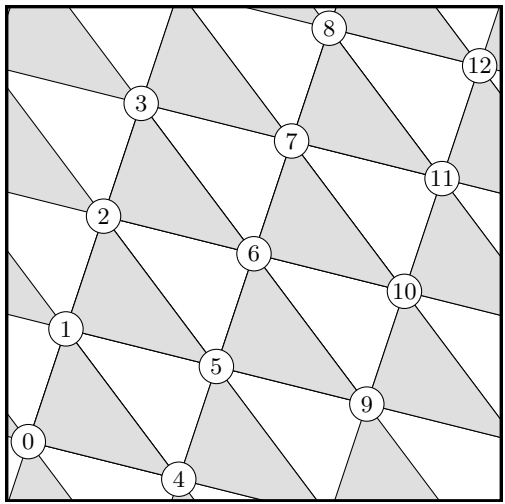
\label{torus}
\centering
\tikz[scale=0.5]{
\tikzstyle{vertex}=[circle, draw, minimum size=13,inner sep=0,fill=white, text=black];
\clip (-0.6,-0.6) rectangle (12.6,12.6);
\foreach[evaluate={\y={Mod(3*\x+1,13)}}] \x in {0,...,12}
\filldraw[fill=gray!25] (\x,\y) -- (\x-3,\y+4) -- (\x-4,\y+1) -- cycle (\x,\y) -- (\x+4,\y-1) -- (\x+1,\y+3) -- cycle (\x,\y) -- (\x-1,\y-3) -- (\x+3,\y-4) -- cycle;
\foreach[evaluate={\y={Mod(3*\x+1,13)}}] \x in {0,...,12} \node[vertex] (\x) at (\x,\y) {\footnotesize\x};
\draw[line width=2] (-0.6,-0.6) rectangle (12.6,12.6);
}
\caption{
 The simplicial complex of Example~\ref{thirteen} can be obtained from the triangulated torus with 13 vertices, 39 edges and 26 triangles, by removing 13 triangles of type $\{i, i+3, i+4\}$. 
 }
\end{figure}

\end{example}

The following Lemma gives an equivalent criterion for $r$-ampleness. 

\begin{lemma}\label{lm:equiv}
\label{AB}
A simplicial complex $X$ is $r$-ample if and only if for every pair $(A,B)$ consisting of a simplicial complex $A$ and an induced subcomplex $B$ of ~$A$, satisfying $|V(A)|\le r+1$, and for every embedding $f_B$ of $B$ into $X$, there exists an embedding $f_A$ of $A$ into $X$ extending $f_B$. 
\end{lemma}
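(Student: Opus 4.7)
The plan is to prove both implications separately, with the forward direction ($r$-ample $\Rightarrow$ extension property) done by induction on $|V(A)\setminus V(B)|$, and the reverse direction by a direct construction of a suitable pair $(A',B)$.

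For the reverse direction, given $U\subseteq V(X)$ with $|U|\le r$ and a subcomplex $A\subseteq X_U$, I would set $B = X_U$ and form $A'$ by attaching a new cone vertex $a$ over $A$ to $X_U$: explicitly $V(A') = U\sqcup\{a\}$, and the simplices of $A'$ are the simplices of $X_U$ together with $\{a\}\cup\sigma$ for every $\sigma\in A$. By construction $B$ is an induced subcomplex of $A'$ and $|V(A')|\le r+1$, and $f_B := \mathrm{id}_{X_U}$ is an embedding. The hypothesis furnishes an embedding $f_{A'}$ extending $f_B$; setting $v = f_{A'}(a)\notin U$, the fact that $f_{A'}$ is an isomorphism onto $X_{U\cup\{v\}}$ together with the description of $A'$ forces $\lk_X(v)\cap X_U = A$.

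For the forward direction, enumerate $V(A)\setminus V(B) = \{a_1,\dots,a_k\}$ and set $B_i := A_{V(B)\cup\{a_1,\dots,a_i\}}$, so $B_0=B$ and $B_k=A$. Since $B$ is induced in $A$, each $B_{i-1}$ is an induced subcomplex of $B_i$, and $|V(B_{i-1})|\le|V(A)|-1\le r$. Inductively extend $f_{B_{i-1}}$ to $f_{B_i}$ as follows: the link $L_i := \lk_{B_i}(a_i)$ is a subcomplex of $B_{i-1}$, so $f_{B_{i-1}}(L_i)$ is a subcomplex of $X_U$ with $U = f_{B_{i-1}}(V(B_{i-1}))$; applying $r$-ampleness yields $v_i\in V(X)\setminus U$ with $\lk_X(v_i)\cap X_U = f_{B_{i-1}}(L_i)$. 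Declare $f_{B_i}(a_i)=v_i$. The key check is that $f_{B_i}$ is an embedding, which reduces to verifying that the simplices of $X_{U\cup\{v_i\}}$ containing $v_i$ correspond under $f_{B_i}$ exactly to the simplices of $B_i$ containing $a_i$; this is precisely the content of the link equation. After $k$ steps $f_A := f_{B_k}$ is the required embedding.

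I do not expect serious obstacles here. The only points that require care are bookkeeping: confirming that "induced subcomplex of an induced subcomplex" behaves correctly so that $L_i \subseteq B_{i-1}$, ensuring the inductive bound $|V(B_{i-1})|\le r$ holds at every step (which uses $|V(A)|\le r+1$, not $|V(B)|\le r$), and translating between the two equivalent formulations of $r$-ampleness given in \eqref{11a} and \eqref{12}. The case $V(A)=V(B)$ is trivial, since then $A=B$ as $B$ is induced, and $f_A = f_B$ works.
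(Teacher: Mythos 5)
Your proposal is correct and follows essentially the same route as the paper: the forward direction via a filtration $B=B_0\subset B_1\subset\dots\subset B_k=A$ of induced subcomplexes, each step adding one vertex as a cone over its link and invoking Definition~\ref{first}, and the reverse direction via the one-new-vertex cone construction (which the paper dismisses as ``clear''). Your write-up merely supplies more of the bookkeeping details than the paper does.
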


\begin{proof} Clearly the property described in Lemma \ref{AB} implies $r$-ampleness and we only need to show the inverse. Suppose that $X$ is $r$-ample and let $(A,B)$ be a pair consisting of a simplicial complex $A$ with $|V(A)|\le r+1$ and its induced subcomplex $B$. We can find a chain of subcomplexes $$B=B_0\subset B_1\subset B_2\subset \dots\subset B_k=A$$
where each subcomplex $B_{i+1}$ is obtained from $B_i$ by adding a vertex $v_{i+1}$ and attaching a cone $v_{i+1}\ast Y_i$ where $Y_i\subset B_i$ is a subcomplex. Here $V(B_i)\le r$ for any $i$. Once $B=B_0$ is identified with an induced subcomplex of $X$ we may apply inductively the definition
to extend this embedding to an embedding of $A$. 
\end{proof}

Applying Lemma \ref{AB} in the case when $B$ is a single vertex, we obtain:

\begin{corollary} \label{lemma:any}
If $X$ is $r$-ample then any simplicial complex on at most $r+1$ vertexes can be embedded into $X$. 
  \end{corollary}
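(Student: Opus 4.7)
The plan is a direct invocation of Lemma~\ref{AB} in the degenerate case suggested by the corollary's hint, with $B$ chosen as a single vertex of $A$.

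Let $A$ be an arbitrary simplicial complex with $|V(A)| \le r+1$. If $A$ is empty there is nothing to prove, so pick a vertex $a \in V(A)$ and let $B$ be the induced subcomplex on the single-vertex set $\{a\}$. Because $r \ge 1$ and $X$ is nonempty by hypothesis (it contains at least one vertex, as required by Definition~\ref{first}), we may select any $v \in V(X)$ and define $f_B \colon B \to X$ by $f_B(a) = v$. A one-vertex map into $X$ is automatically an isomorphism onto the induced subcomplex $X_{\{v\}}$, so $f_B$ is an embedding in the sense of Section~\ref{defs}.

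Now apply Lemma~\ref{AB} to the pair $(A,B)$ and the embedding $f_B$: since $|V(A)| \le r+1$, the lemma produces an embedding $f_A \colon A \to X$ extending $f_B$. In particular $f_A$ is an embedding of $A$ into $X$, which is the desired conclusion.

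There is essentially no obstacle here; the entire content of the statement is packaged into Lemma~\ref{AB}, and the corollary is the special case where the prescribed partial embedding is trivial (a single point). The only minor point to check is that a one-vertex subcomplex is genuinely induced and that $X$ has at least one vertex to receive the image, both of which are immediate from the definitions.
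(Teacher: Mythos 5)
Your proof is correct and follows exactly the paper's route: the paper also obtains this corollary by applying Lemma~\ref{AB} with $B$ a single vertex. Your extra care in checking that a one-vertex subcomplex is induced and that $X$ is nonempty is sound but unnecessary detail for this step.
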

\begin{corollary}
The dimension of an $r$-ample simplicial complex $X$ is at least~$r$.
\end{corollary}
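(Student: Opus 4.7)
The plan is to deduce this immediately from Corollary~\ref{lemma:any}. Consider the full simplex $\Delta^{r}$ on $r+1$ vertices, regarded as a simplicial complex consisting of all nonempty subsets of an $(r+1)$-element set; this is a simplicial complex with $|V(\Delta^r)| = r+1$, and its dimension is $r$.

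By Corollary~\ref{lemma:any}, since $X$ is $r$-ample, there is an embedding of $\Delta^r$ into $X$ as an induced subcomplex. In particular, the top-dimensional face of $\Delta^r$, which is an $r$-simplex, appears as a simplex of $X$. Hence $\dim X \ge r$, as claimed.

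There is really no obstacle here: the main work has already been done in Lemma~\ref{AB} and Corollary~\ref{lemma:any}. The only thing to verify is that $\Delta^r$ qualifies as an input to Corollary~\ref{lemma:any}, which is immediate since $|V(\Delta^r)| = r+1 \le r+1$. One might alternatively prove it directly from Definition~\ref{def:ample} by iteratively constructing an $r$-simplex: start with any vertex $v_0$, then repeatedly apply the definition to a subset $U \subseteq \{v_0, \ldots, v_i\}$ with $A = X_U$ (the whole induced subcomplex) to obtain a new vertex $v_{i+1}$ adjacent to all of $U$ and with the maximal possible link intersection; after $r$ such steps one obtains an $r$-simplex inside $X$. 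But the route through Corollary~\ref{lemma:any} is cleaner and more in keeping with the exposition.
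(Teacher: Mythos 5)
Your proof is correct and is exactly the argument the paper intends: the corollary is stated immediately after Corollary~\ref{lemma:any} precisely because embedding the full simplex on $r+1$ vertices (an $r$-dimensional complex) into $X$ yields an $r$-simplex of $X$. Nothing further is needed.
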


We shall denote by $M'(n)$ the number of simplicial complexes with vertexes from the set $\{1, 2, \dots, n\}$. 
The number $M'(n)+1=M(n)$ is known as {\it the Dedekind number}, see \cite{kleitman1975}, it equals the number of monotone Boolean functions of $n$ variables and has some other combinatorial interpretations, for example, it equals the number of antichains in the set of $n$ elements. A few first values of {\it \lq\lq the reduced Dedekind number\rq\rq} $M'(n)$ are $M'(1)=2$, $M'(2)=5$, $M'(3)=19$. For  general $n$,  
$M'(n)$ admits estimates
\begin{eqnarray}\label{dede}
\hskip 1cm
{\binom n {\lfloor n/2 \rfloor}}\,  \le\,  \log_2 (M'(n))\, \le \, {\binom n {\lfloor n/2 \rfloor} \left(1+O\left(\frac{\log n}{n}\right)\right)}.
\end{eqnarray}
The lower bound in (\ref{dede}) is easy: one counts only the simplicial complexes having the full $\lfloor n/2 \rfloor$ skeleton; the upper bound in (\ref{dede}) has been obtained in \cite{kleitman1975}. We shall also mention that 
\begin{eqnarray}
{\binom n {\lfloor n/2 \rfloor}} \sim \sqrt{\frac{2}{\pi n}}\cdot 2^n,
\end{eqnarray}
as follows from the Stirling formula. Thus, 
\begin{eqnarray}\label{frml25}
\log_2 \log_2 (M'(n)) = n - \frac{1}{2} \log_2 n +O(1).
\end{eqnarray}

\begin{corollary}\label{cor26}\label{lower}
An $r$-ample simplicial complex contains at least $$M'(r)+r \ge 2^{\binom r {\lfloor r/2\rfloor}} +r$$
vertexes. 
\end{corollary}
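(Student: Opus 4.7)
The plan is to show that a well-chosen set of $r$ vertices of $X$ forces the existence of $M'(r)$ additional distinct vertices via the $r$-ampleness axiom. The key preliminary step is to locate inside $X$ an induced copy of the complete $(r-1)$-simplex. The preceding corollary applied to the full simplex on $r$ vertices (which has at most $r+1$ vertices) guarantees such a subcomplex. Let $U \subseteq V(X)$ with $|U|=r$ be its vertex set, so that $X_U$ is the complete simplex on $U$.

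With $X_U$ the full simplex, every simplicial complex on the vertex set $U$ is a subcomplex of $X_U$. By $r$-ampleness, for each subcomplex $A \subseteq X_U$ there exists a vertex $v_A \in V(X) \setminus U$ satisfying $\lk_X(v_A) \cap X_U = A$. The assignment $A \mapsto v_A$ is injective because $A$ is recovered from $v_A$ as $\lk_X(v_A) \cap X_U$; distinct subcomplexes must therefore yield distinct vertices outside $U$. Consequently $|V(X) \setminus U|$ is at least the total number of simplicial complexes with vertex set in $U$, which is exactly $M'(r)$. Hence $|V(X)| \geq M'(r) + r$.

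The second inequality $M'(r) \geq 2^{\binom{r}{\lfloor r/2 \rfloor}}$ is the easy half of (\ref{dede}), obtained by freely selecting any subset of the $\binom{r}{\lfloor r/2 \rfloor}$ subsets of size $\lfloor r/2 \rfloor$ to sit as maximal simplices on top of the full $(\lfloor r/2 \rfloor - 1)$-skeleton. Combining the two bounds yields the stated estimate.

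There is no serious obstacle once the induced complete simplex is located: the rest is a direct pigeonhole count. The only mild subtlety worth recording is the necessity of choosing $X_U$ to be \emph{full}, which maximises the supply of distinct subcomplexes $A$ and hence the number of forced vertices; a less complete induced subcomplex on $U$ would weaken the bound.
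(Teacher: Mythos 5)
Your proof is correct and follows essentially the same route as the paper: embed the full simplex on $r$ vertices via Corollary \ref{lemma:any}, then use $r$-ampleness to produce a distinct external vertex $v_A$ for each of the $M'(r)$ subcomplexes $A$, with injectivity of $A\mapsto v_A$ coming from the fact that $A$ is recovered as $\lk_X(v_A)\cap X_U$. The only remark worth making is that your explicit justification of injectivity and of why the induced subcomplex on $U$ should be the \emph{full} simplex spells out details the paper leaves implicit.
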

\begin{proof} Let $X$ be an $r$-ample complex.
Using Lemma \ref{lemma:any} we can embed into $X$ an $(r-1)$-dimensional simplex $\Delta$ having $r$ vertexes. Applying Definition \ref{first},
for every subcomplex $A$ of $\Delta$ we can find a vertex $v_A$ in the complement of $\Delta$ having $A$ as its link intersected with $\Delta$. The number of subcomplexes $A$ is $M'(r)$ and we also have $r$ vertexes of $\Delta$ which gives the estimate. 
\end{proof}

The following Lemma gives information about local structure of $r$-ample complexes.

\begin{lemma}\label{cone}
Let $X$ be an $r$-ample simplicial complex. Then any simplicial map $f:K\to X$, where $|V(K)|\le r$, is null-homotopic. 
\end{lemma}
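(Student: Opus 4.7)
The plan is to find a single vertex of $X$ that cones off the entire image of $f$, and then use the cone as an explicit null-homotopy. More precisely, I would set $U = f(V(K))\subseteq V(X)$ and observe that $|U|\le |V(K)|\le r$, so Definition~\ref{def:ample} applies to the pair $(U,A)$ with $A := X_U$ itself. This produces a vertex $v\in V(X)\smallsetminus U$ such that
\[
\lk_X(v)\cap X_U \;=\; X_U,
\]
i.e.\ every simplex of the induced subcomplex $X_U$ is contained in $\lk_X(v)$.

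Next, I would argue that this yields an inclusion $v\ast X_U \subseteq X$ of subcomplexes. Indeed, for every simplex $\sigma\in X_U$, the condition $\sigma\in\lk_X(v)$ means precisely that $v\ast\sigma$ is a simplex of $X$. In fact, by the reformulation~(\ref{12}), one has $X_{U\cup\{v\}}=X_U\cup (v\ast X_U)=v\ast X_U$ (the cone with apex $v$ over $X_U$).

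Since $f(K)\subseteq X_U\subseteq v\ast X_U$, the simplicial map $f\colon K\to X$ extends to a simplicial map $\tilde f\colon CK\to v\ast X_U\subseteq X$ from the cone $CK$, by sending the cone apex to $v$ and each vertex $w\in V(K)$ to $f(w)$; simplicial compatibility holds because for any simplex $\{w_0,\dots,w_k\}$ of $K$ the set $\{v,f(w_0),\dots,f(w_k)\}$ is a simplex of $v\ast X_U$. Since $CK$ is contractible, the composition $K\hookrightarrow CK\xrightarrow{\tilde f} X$ is null-homotopic, and this composition is exactly $f$.

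There is no real obstacle here once one identifies the correct choice of $A$ in the ampleness definition; the only mild subtlety is that $f$ need not be injective, so $U$ may be smaller than $V(K)$, but this only helps as $|U|\le r$ is still satisfied. The entire argument thus amounts to a single application of $r$-ampleness with the maximal possible $A=X_U$, yielding a universal cone vertex over the image.
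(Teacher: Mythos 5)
Your proposal is correct and follows exactly the paper's argument: apply $r$-ampleness to $U=f(V(K))$ with $A=X_U$ to obtain a vertex $v$ with $X_{U\cup\{v\}}=v\ast X_U$, and then observe that $f$ factors through this contractible cone. The extra detail you give about extending $f$ over $CK$ is a fine way of making the null-homotopy explicit, but the route is the same.
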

\begin{proof}
The set $U=f(V(k))\subset V(X)$ has cardinality $\le r$ and applying Definition \ref{first} we can find a vertex $v\in V(X)-U$ such that $X_{U\cup \{v\}} =v\ast X_U$ (cone over $X_U$). Thus we see that $f:K\to X$ factorises through a map with values in the cone $v\ast X_U$ which is contractible and hence $f$ is null-homotopic. 
\end{proof}

\section{ Resilience of ample complexes}
\label{3}

In this section we present a few results characterising \lq\lq   resilience\rq\rq\, of $r$-ample simplicial complexes: small perturbations to the complex
reduce its ampleness in a controlled way and hence many important geometric properties pertain. 

The perturbations that we have in mind are as follows. If $X$ is a simplicial complex and $\mathcal F$ is a finite set of simplexes of $X$, one may consider the simplicial complex $Y$ obtained from $X$ by removing all simplexes of $\mathcal F$ as well as all simplexes which have faces 
belonging to $\mathcal F$. We shall say that $Y$ {\it is obtained from $X$ by removing the set of simplexes $\mathcal F$}. 

Below we assume that $\mathcal F$ is {\it \lq\lq small\rq\rq } and $X$ is {\it \lq\lq large\rq\rq}; we are interested in situations when $Y$ preserves certain properties of $X$ despite \lq\lq the damage\rq\rq \ caused by removing 
the family of simplexes $\mathcal F$. 

We shall characterise the size of $\mathcal F$ by two numbers: $|\mathcal F|$ (the cardinality of $\mathcal F$) and 
$ \dim(\mathcal F)=\sum_{\sigma\in \mathcal F} \dim \sigma$ ({\it \lq\lq the total dimension\rq\rq\ of $\mathcal F$}). 
\begin{theorem}\label{remove1}
Let $X$ be an $r$-ample simplicial complex and let $Y$ be obtained from $X$ by removing a set  $\mathcal F$ of simplexes. Then $Y$ is $(r-k)$-ample provided that 
 \begin{eqnarray}\label{less}
  |\mathcal F|+\dim(\mathcal F) <  M'(k)+k.
 \end{eqnarray}
 In particular, taking into account (\ref{dede}), the complex $Y$ is $(r-k)$-ample if 
  \begin{eqnarray}\label{less1}
|\mathcal F|+\dim(\mathcal F) < 2^{\binom k {\lfloor k/2\rfloor}}+k . 
 \end{eqnarray}
 \end{theorem}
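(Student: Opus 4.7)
The plan is to reduce the claim to a counting argument. Fix $U\subseteq V(Y)$ with $|U|\le r-k$ and a subcomplex $A\subseteq Y_U$; I must produce a vertex $v\in V(Y)-U$ with $\lk_Y(v)\cap Y_U = A$. Since $A\subseteq Y_U$, every simplex of $A$ survives in $Y$, hence no face of any simplex in $A$ belongs to $\mathcal F$. Consequently, any candidate vertex $v\in V(X)-U$ that satisfies (i) $\lk_X(v)\cap X_U = A$, (ii) $\{v\}\notin\mathcal F$, and (iii) $v\ast\tau\notin\mathcal F$ for every $\tau\in A$ automatically lies in $V(Y)$ and satisfies $\lk_Y(v)\cap Y_U=A$. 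It therefore suffices to show that the number of vertices satisfying (i) is at least $M'(k)+k$, while the number of candidates violating (ii) or (iii) is at most $|\mathcal F|+\dim(\mathcal F)$.

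To produce $M'(k)+k$ vertices satisfying (i), I first enlarge $U$ by $k$ auxiliary vertices $W=\{w_1,\dots,w_k\}$ which themselves already have link $A$ on $U$. Form the abstract complex $Z=X_U\cup(\Delta_W\ast A)$, where $\Delta_W$ is the full $(k-1)$-simplex on $W$. One checks that $Z$ is a simplicial complex with $X_U$ as an induced subcomplex, and $|V(Z)|=|U|+k\le r$, so Lemma~\ref{AB} produces an embedding of $Z$ into $X$ extending the inclusion $X_U\hookrightarrow X$; after identifying $W$ with its image, we have $X_{U\cup W}=Z$ and $\lk_X(w_i)\cap X_U=A$ for each $i$. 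Next, apply Definition~\ref{first} with $U'=U\cup W$ (still of size $\le r$): for each of the $M'(k)$ subcomplexes $C_W$ of $\Delta_W$, the union $C=A\cup C_W$ is a subcomplex of $X_{U'}=Z$ satisfying $C\cap X_U=A$, and $r$-ampleness yields a vertex $v_{C_W}\in V(X)-U'$ with $\lk_X(v_{C_W})\cap X_{U'}=C$, whence $\lk_X(v_{C_W})\cap X_U=A$. These $M'(k)$ vertices are pairwise distinct (distinguished by their restrictions to $U'$) and all lie outside $W$; together with $w_1,\dots,w_k$ they form $M'(k)+k$ distinct candidates.

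Finally, I bound the number of bad candidates. If some $\sigma\in\mathcal F$ causes $v$ to fail (ii) or (iii), then $\sigma=v\ast\tau$ for some $\tau\subseteq U$ (allowing $\tau=\emptyset$ for (ii)), so $v$ must be one of the $\dim(\sigma)+1$ vertices of $\sigma$. Summing over $\mathcal F$, the number of candidates excluded is at most $\sum_{\sigma\in\mathcal F}(\dim(\sigma)+1)=|\mathcal F|+\dim(\mathcal F)$, which by hypothesis is strictly less than $M'(k)+k$. Hence a good candidate exists, and $Y$ is $(r-k)$-ample. The most delicate step is the construction of $Z$ in Step~2, namely choosing the correct join structure so that the embedding guaranteed by Lemma~\ref{AB} produces $k$ extra witnesses with link exactly $A$ on $X_U$; once this is set up properly, the rest is a clean inclusion-exclusion count.
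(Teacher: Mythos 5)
Your proof is correct and follows essentially the same route as the paper's: you build the auxiliary complex $X_U\cup(\Delta_W\ast A)$, embed it via Lemma~\ref{AB}, use $r$-ampleness over $U\cup W$ to obtain $M'(k)$ further witnesses indexed by the subcomplexes of $\Delta_W$, and then compare the resulting $M'(k)+k$ candidates against the at most $|\mathcal F|+\dim(\mathcal F)$ vertices touched by $\mathcal F$. The only differences are presentational — you organize this as a direct count with an explicit list of conditions (i)--(iii) rather than the paper's case analysis and proof by contradiction, which also lets you skip the antichain reduction — and your verification that conditions (i)--(iii) force $\lk_Y(v)\cap Y_U=A$ is sound.
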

\begin{proof} Without loss of generality we may assume that $\mathcal F$ forms an anti-chain, i.e. 
no simplex of $\mathcal F$ is a proper face of another simplex of $\mathcal F$. 
Indeed, if $\sigma_1\subset \sigma_2$, where $\sigma_1, \sigma_2\in \mathcal F$,
we can remove $\sigma_2$ from $\mathcal F$ without affecting the complex $Y$.


Consider a vertex $v\in V(Y)$ and its links $\lk_Y(v)\subset \lk_X(v)$ in $Y$ and in $X$, correspondingly. 
Denote by $\mathcal F_v$ the set of simplexes $\sigma\subset \lk_X(v)$ such that either $\sigma\in \mathcal F$ or $v\sigma\in \mathcal F$. As follows directly from the definitions,  {\it $\lk_Y(v)$ is obtained from $\lk_X(v)$ by removing the set of simplexes $\mathcal F_v$}.

Represent $\mathcal F$ as the disjoint union 
$$\mathcal F \, =\, \mathcal F_0\sqcup \mathcal F_{1},$$
 where $\mathcal F_0$ is the set of zero-dimensional simplexes from $\mathcal F$ and $\mathcal F_{1}$ is the set of simplexes in $\mathcal F$ having dimension $\ge 1$. 
 Denote by 
 $$W_0=\cup_{\sigma\in \mathcal F_0} V(\sigma) \quad \mbox{and}\quad W_1=\cup_{\sigma\in \mathcal F_1}V(\sigma)$$ 
 the sets of zero-dimensional simplexes and the set of vertexes of simplexes of positive dimension in $\mathcal F$. 
 Note that $W_0\cap W_1=\emptyset$ due to our anti-chain assumption. Besides, $V(Y)=V(X)-W_0$ and therefore 
 $W_1\subset V(Y)$. 

Let  $U\subset V(Y)$ be a subset and let $v\in V(Y)$ be a vertex such that: \newline (a) $v\notin W_1$ and (b) 
the set $\lk_X(v)\cap X_U$ is a subcomplex of $Y_U$. Then 
\begin{eqnarray}\label{conclusion}
\lk_Y(v)\cap Y_U\, =\, \lk_X(v)\cap X_U. 
\end{eqnarray}
Indeed, we have $\lk_X(v)\cap Y_U =\lk_Y(v)\cap Y_U$ because of our assumption (a) and  
$\lk_X(v)\cap Y_U = \lk_X(v)\cap X_U$ because of (b).

Let $k$ be an integer satisfying (\ref{less}) and 
let $U\subset V(Y)$ be a subset with $|U|\le r-k$. 
Given a subcomplex $A\subset Y_{U}$, we want to show the existence of a vertex $v\in V(Y)-U$ such that 
\begin{eqnarray}\label{inters}
\lk_Y(v)\cap Y_{U} =A.\end{eqnarray}
This would mean that our complex $Y$ is $(r-k)$-ample.

Consider the induced subcomplex $X_{U}$ which obviously contains $A$ as a subcomplex. Consider also the abstract simplicial complex 
$$K=X_{U}\cup (A\ast \Delta),$$ 
where $\Delta$ is an abstract full simplex on $k$ vertexes. 
Note that 
$K$ has at most $r$ vertexes, 
$X_{U}$ is an induced subcomplex of $K$ and it is naturally embedded into $X$. 
Using the assumption that $X$ is $r$-ample and applying Lemma \ref{AB}, we can find an embedding of $K$ into $X$ extending the identity map of $X_{U}$. 
In other words, we can find $k$ vertexes $v_1, \dots, v_k\in V(X)-U$ such that for a simplex $\tau$ of $X_{U}$ 
and for any subset $$\tau'\subset \{v_1, \dots, v_k\}=U'$$ one has 
$\tau\tau' \in X$ if and only if $\tau \in A$. 
If one of these vertexes $v_i$ lies in $V(Y)-W_1$ then (using (\ref{conclusion}))
$$\lk_Y(v_i)\cap Y_{U}=\lk_X(v_i)\cap X_{U}=A$$ 
and we are done. Thus, without loss of generality, we can assume that 
\begin{eqnarray}\label{includ}
U'\subset W_0\cup W_1.
\end{eqnarray}

Let $Z\subset \Delta$ be an arbitrary simplicial subcomplex. 
We may use the $r$-ampleness of $X$ and apply Definition \ref{first} to the subcomplex $A\sqcup Z$ of 
$X_{U\cup U'}$. 
This gives a vertex $v_Z\in V(X)-(U\cup U')$ satisfying 
$$\lk_{X}(v_Z) \cap X_{U\cup U'}  =A\sqcup Z$$ and in particular, 
\begin{eqnarray}\label{inters2}
\lk_{X}(v_Z) \cap X_{U}  =A.\end{eqnarray}
For distinct subcomplexes $Z, Z'\subset \Delta$ the points $v_Z$ and $v_{Z'}$ are distinct and the cardinality of the set 
$\{v_Z; Z\subset \Delta\}$ equals $M'(k)$. 
Noting that (\ref{inters2}) is a subcomplex of $Y_U\subset X_U$ and 
comparing (\ref{conclusion}), (\ref{inters}), (\ref{inters2}), we see that our statement would follow once we know that 
the vertex $v_Z$ lies in $V(Y) -W_1$ at least for one subcomplex $Z$. 

Let us assume the contrary, i.e. 
$v_Z \in (W_0\cup W_1) - U'$
for every subcomplex $Z\subset \Delta$. The cardinality of the set $\{v_Z\}$ equals $M'(k)$ and the cardinality of the set $(W_0\cup W_1) - U'$ equals $|\mathcal F|+\dim \mathcal F - k$ and we get a contradiction with our assumption 
(\ref{less}). 

This completes the proof. 
\end{proof}

\begin{cor}\label{vertexcon}
Let $X$ be an $r$-ample simplicial complex and let $Y$ be obtained from $X$ by removing a set $\mathcal F$ of simplexes. Denote by $a_i$ the number of $i$-dimensional simplexes in $\mathcal F$ where $i=0, 1, \dots.$. Then:
\begin{enumerate}
\item[(a)] If $r\ge 3$ and  $a_0+2a_1 < M'(r-2) +r-2$ then $Y$ is path-connected. 
In particular, $Y$ is path-connected if
 $$a_0+2a_1 < 2^{\binom {r-2}{\lfloor r/2\rfloor -1}} + r-2.$$
 \item[(b)] If $r\ge 5$ and $a_0+2a_1 +3a_2 < M'(r-4) +r-4$ then $Y$ is simply connected. In particular, $Y$ is simply connected if 
 $$a_0+2a_1 +3a_2 < 2^{\binom {r-4}{\lfloor r/2\rfloor -2}} + r-4.$$
 
 \item[(c)] If $r\ge 19$ and $a_0+2a_1 +3a_2+4a_3< M'(r-18) +r-18$ then $Y$ is 2-connected. In particular, $Y$ is 2-connected if 
 $$a_0+2a_1 +3a_2+4a_3 < 2^{\binom {r-18}{\lfloor r/2\rfloor -9}} + r-18.$$
 \end{enumerate}
\end{cor}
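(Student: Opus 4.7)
The plan is to reduce each statement to Theorem~\ref{remove1} by exploiting the fact that $k$-connectedness of a simplicial complex depends only on its $(k+1)$-skeleton. Consequently, only the simplexes of $\mathcal F$ of dimension at most $k+1$ can influence whether $Y$ is $k$-connected, so one should apply Theorem~\ref{remove1} not to $\mathcal F$ itself but to its low-dimensional part.

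Formally, let $\mathcal F^{\le d} \subseteq \mathcal F$ denote the simplexes of $\mathcal F$ of dimension at most $d$, and let $Y'$ be the complex obtained from $X$ by removing only $\mathcal F^{\le d}$. A simplex $\sigma\in X$ with $\dim\sigma\le d$ belongs to $Y$ if and only if no face of $\sigma$ lies in $\mathcal F$; since every face of such $\sigma$ has dimension $\le d$, this is equivalent to no face of $\sigma$ lying in $\mathcal F^{\le d}$, which is exactly the condition $\sigma\in Y'$. Hence $Y^{(d)}=(Y')^{(d)}$, and therefore $Y$ and $Y'$ share the same $\pi_i$ for $i<d$; in particular they are simultaneously $(d-1)$-connected.

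Noting that $|\mathcal F^{\le d}| + \dim\mathcal F^{\le d} = \sum_{i=0}^{d}(i+1)\,a_i$, each part now follows by applying Theorem~\ref{remove1} to $\mathcal F^{\le d}$ with a suitable $d$ and then invoking the corresponding connectivity threshold for $r$-ample complexes. For part (a), take $d=1$ and $k=r-2$: the stated hypothesis on $a_0+2a_1$ gives that $Y'$ is $2$-ample, hence path-connected, as observed in \S\ref{defs}. For part (b), take $d=2$ and $k=r-4$: the hypothesis on $a_0+2a_1+3a_2$ gives that $Y'$ is $4$-ample, and the result of \S\ref{4} that $4$-ample complexes are simply connected concludes the proof. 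For part (c), take $d=3$ and $k=r-18$: the hypothesis on $a_0+2a_1+3a_2+4a_3$ gives that $Y'$ is $18$-ample, and the \S\ref{4} result that $18$-ample complexes are $2$-connected applies.

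The main work, lying outside this corollary, is establishing the connectivity thresholds in \S\ref{4}: that $4$-ampleness implies simple connectivity and that $18$-ampleness implies $2$-connectivity. These are presumably proved by combining Lemma~\ref{cone} (which supplies null-homotopies of simplicial maps from $2$- and $3$-spheres with few vertexes) with a simplicial approximation argument controlling the vertex count. Once those inputs are in hand, the present corollary is a straightforward bookkeeping exercise combining the skeleton observation above with Theorem~\ref{remove1}.
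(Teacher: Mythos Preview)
Your proof is correct and follows the same route as the paper, which simply cites Theorem~\ref{remove1} together with the relevant connectivity thresholds (2-ample $\Rightarrow$ connected, Proposition~\ref{lem1cycle}, Theorem~\ref{2conn}). Your skeleton-reduction step---replacing $\mathcal F$ by $\mathcal F^{\le d}$ before applying Theorem~\ref{remove1}---is a detail the paper leaves implicit but which is indeed needed to explain why only the low-dimensional $a_i$ enter the hypotheses; without it the quantity $|\mathcal F|+\dim\mathcal F$ in Theorem~\ref{remove1} would involve all $a_i$.
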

\begin{proof}
Claim (a) follows from Theorem \ref{remove1} and from the observation that a 2-ample complex is connected; claim (b) follows from Theorem \ref{remove1} and Proposition \ref{lem1cycle}; claim (c) follows from Theorems \ref{remove1} and  \ref{2conn}. 
\end{proof}

We finish this section with the following observation. 
\begin{prop}
\label{lem link}
The link of a vertex in an $r$-ample simplicial complex is $(r-1)$-ample. More generally, 
 the link of every k-dimensional simplex in an $r$-ample complex is $(r-k-1)$-ample. 
\end{prop}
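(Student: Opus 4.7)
The plan is to prove the proposition by induction on $k$, reducing to the base case $k=0$: the link of a single vertex $w$ in an $r$-ample complex $X$ is $(r-1)$-ample. Setting $L = \lk_X(w)$, I take an arbitrary subset $U \subseteq V(L)$ with $|U| \le r-1$ and an arbitrary subcomplex $A \subseteq L_U$, and seek a vertex $v \in V(L) - U$ realising $\lk_L(v) \cap L_U = A$.

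The idea is to lift $U$ to $U' = U \cup \{w\} \subseteq V(X)$, which has $|U'| \le r$, and to apply $r$-ampleness of $X$ to the subcomplex $B = w \ast A$ (the cone over $A$ with apex $w$). First I check that $B$ is indeed a subcomplex of $X_{U'}$: for every $\tau \in A \subseteq L_U$ the definition of the link gives $w\tau \in X$, so every simplex of $w \ast A$ is a simplex of $X$. Applying Definition \ref{first} to the pair $(U', B)$ produces a vertex $v \in V(X) - U'$ with $\lk_X(v) \cap X_{U'} = w \ast A$. Because $w \in B$ we have $vw \in X$, so $v \in V(L)$. For any $\tau \subseteq U$ lying in $L_U$, the chain of equivalences
$$\tau \in \lk_L(v) \iff vw\tau \in X \iff w\tau \in \lk_X(v) \cap X_{U'} \iff w\tau \in w \ast A \iff \tau \in A$$
yields $\lk_L(v) \cap L_U = A$, which is precisely $(r-1)$-ampleness of $L$.

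For the inductive step, given a $k$-simplex $\sigma$ with $k \ge 1$, I pick any vertex $w \in \sigma$ and set $\sigma' = \sigma \setminus \{w\}$, a $(k-1)$-simplex. Unwinding the link definition gives the identity $\lk_X(\sigma) = \lk_{\lk_X(\sigma')}(w)$, since $\tau \cup \sigma \in X$ if and only if $(\tau \cup \{w\}) \cup \sigma' \in X$. By the inductive hypothesis $\lk_X(\sigma')$ is $(r-k)$-ample, and the base case applied to $w$ as a vertex of $\lk_X(\sigma')$ shows that its link at $w$ is $(r-k-1)$-ample, completing the induction. The only delicate point of the whole argument is the verification that $B = w \ast A$ actually sits inside $X_{U'}$; the remainder is a mechanical unwinding of Definition \ref{first}.
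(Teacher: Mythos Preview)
Your proof is correct. It differs from the paper's in two ways. First, for the base case $k=0$ the paper invokes the equivalent embedding criterion (Lemma~\ref{lm:equiv}): given a pair $(A,B)$ with $|V(A)|\le r$ and an embedding $f_B:B\to L$, it passes to the cones $(CA,CB)$ with apex $w$, embeds $CB$ into $X$ by sending the apex to $w$, extends to $CA$ by $r$-ampleness, and restricts back to $A\to L$. You instead work directly from Definition~\ref{first}, lifting $(U,A)$ to $(U\cup\{w\},\, w\ast A)$ and reading off the desired link identity from the produced vertex. Second, for general $k$ the paper avoids induction and argues in one shot via the joins $(A\ast\sigma,\, B\ast\sigma)$, whereas you peel off one vertex of $\sigma$ at a time using the identity $\lk_X(\sigma)=\lk_{\lk_X(\sigma')}(w)$. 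Your route is slightly more elementary in that it never appeals to Lemma~\ref{lm:equiv}; the paper's route is a little cleaner in that the vertex case and the simplex case are handled by the same join construction without an inductive wrapper.
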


\begin{proof} We consider the case $k=0$ first. 
Let $v\in V(X)$ be a vertex and let $L$ denote the link of $v$ in $X$. Let $(A, B)$ be a pair consisting of a simplicial complex $A$ and its induced subcomplex $B$ where $|V(A)|\le r$. Consider the pair $(CA, CB)$ consisting of cones with apex~$w$. Note that $CB$ is an induced subcomplex of $CA$ and $|V(CA)|\le r+1$. Since $v \ast L \subseteq X$, any embedding $f_B:B\to L$ can be extended to an embedding $f_{CB}: CB\to X$ where $w$ is mapped into $v$. Since $X$ is $r$-ample, applying Lemma~\ref{lm:equiv} we can find an embedding $f_{CA}: CA\to X$ extending $f_{CB}.$ Then the restriction $f_{CA}|A$ is an embedding $A\to L$ extending $f_B$. 

The case when $k>0$ is similar. Let $\sigma$ be a $k$-simplex in $X$ and let $L$ denote its link. 
Consider a pair $(A, B)$ with $|V(A)|\le r-k$, an induced subcomplex $B$ of $A$ and an embedding $f_B: B\to L$. Consider the joins $A'=A\ast \sigma$ and $B'=B\ast \sigma$ and note that $V(A') \le r+1$ and $B'$ is an induced subcomplex of $A'$. By Lemma \ref{lm:equiv} the join embedding $f_{B'}=f_B\ast 1: B'=B\ast \sigma\to L\ast \sigma$ can be extended to an embedding
$f_{A'}: A' \to L\ast \sigma$ which restricts to an embedding $f_A: A\to L$ extending $f_B$. 
\end{proof}

\section{Higher connectivity of ample complexes}
\label{4}

It is natural to ask whether the geometric realisation of an $r$-ample simplicial complex is highly connected, i.e. has vanishing homotopy groups below certain dimension. The motivation for this question comes from the fact that 
an $r$-ample finite simplicial complex can be viewed as an approximation to the Rado simplicial complex whose geometric realisation is homeomorphic to an infinite dimensional simplex and hence is contractible \cite{farber2019rado}. 

Recall that a simplicial complex $Y$ is $m${\it -connected} if for every triangulation of the $i$-dimensional sphere $S^i$ 
with $i\le m$ and for every 
simplicial map $\alpha: S^i\to Y$ there exists a triangulation of the disc $D^{i+1}$ extending the given triangulation of the sphere $S^i=\partial D^{i+1}$ and a simplicial map 
$\beta: D^{i+1}\to Y$ extending $\alpha$. A 1-connected complex is also said to be simply connected. 

\begin{prop}\label{lem1cycle}
For $r\ge 4$, any $r$-ample simplicial complex $Y$ is simply connected. 
Moreover, any simplical loop $\alpha: S^1\to Y$ 
with $n$ vertexes in an $r$-ample complex $Y$ bounds a simplicial disc $\beta: D^2\to Y$ where $D^2$ is a triangulation of the disc having $n$ boundary vertexes, at most  
$\lceil\frac{n-3}{r-3}\rceil $ internal vertexes and at most $\lceil \frac{n-3}{r-3}\rceil \cdot(r-1)+1$ triangles. 
\end{prop}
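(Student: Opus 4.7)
The plan is to construct the filling disc iteratively, using $r$-ampleness to attach a fan of triangles over a segment of the boundary at each step, and a single cone to close up at the end. Label the boundary vertices of the loop cyclically as $v_1,v_2,\dots,v_n$ with each $v_iv_{i+1}$ an edge of $Y$.

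Main iteration: as long as the current boundary has $n>r$ vertices, pick $r$ consecutive ones $v_1,\dots,v_r$ and let $U=\{v_1,\dots,v_r\}$, so $|U|\le r$. Applying Definition~\ref{def:ample} with $A=X_U$ produces a vertex $w\in V(Y)\setminus U$ satisfying $X_{U\cup\{w\}}=w\ast X_U$; in particular the $r-1$ triangles $wv_iv_{i+1}$, $1\le i\le r-1$, all lie in $Y$. In the abstract disc $D^2$ I add a new interior vertex $w^*$ with $\beta(w^*)=w$ and glue on the fan of $r-1$ triangles $s_is_{i+1}w^*$, so that the boundary segment $v_1\cdots v_r$ is replaced by the two-edge path $v_1 w v_r$. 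The boundary thus loses $r-3$ vertices while acquiring one interior vertex and $r-1$ triangles. Base case: as soon as $n\le r$, Lemma~\ref{cone} produces $w\in V(Y)\setminus U$ with the whole cone $w\ast X_U$ contained in $Y$, and we close the disc by coning one final interior vertex over the remaining $n\le r$ edges.

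Starting from $n$ and losing $r-3$ vertices per step, the base case is reached after $k=\lceil(n-r)/(r-3)\rceil$ iterations. The total number of interior vertices is therefore
\[
k+1=\left\lceil\frac{n-r}{r-3}\right\rceil+1=\left\lceil\frac{n-3}{r-3}\right\rceil,
\]
and the total number of triangles is bounded by $k(r-1)+r=(k+1)(r-1)+1$, which matches the stated bound. Simple connectivity for $r\ge 4$ then follows at once: any continuous loop in $Y$ is homotopic, via simplicial approximation, to a simplicial one that bounds such a disc.

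The main obstacle I anticipate is not topological but combinatorial: one must verify that the construction is sound even when the image vertices $v_1,\dots,v_n$ in $Y$ are not all distinct, or when the cone vertex $w$ produced in some iteration happens to coincide with a later boundary vertex $v_j$ ($j>r$). Both issues are handled by carrying out the construction on the abstract source disc $D^2$, whose vertices are distinct by fiat, while requiring $\beta:D^2\to Y$ to be only a simplicial map and not an embedding. One should also sanity-check the trivial edge case $n\le 3$: a loop on three boundary vertices bounds either the corresponding $2$-simplex in $Y$, when it already exists, or else a cone with one extra interior vertex and three triangles.
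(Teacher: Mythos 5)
Your proposal is correct and follows essentially the same argument as the paper: repeatedly cone over an arc of $r$ consecutive boundary vertices (shrinking the loop by $r-3$ each time, adding one interior vertex and $r-1$ triangles), then close the remaining loop of length $\le r$ with a single cone, yielding exactly the stated counts. Your additional remarks on non-injectivity (working with an abstract disc and a simplicial map rather than an embedding) and the $n\le 3$ edge case are sensible refinements of the same construction, not a different route.
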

\begin{proof}
If $n\le r$ we may simply apply the definition of $r$-ampleness and find an extension $\beta: D^2\to Y$ with a single internal vertex. If $n>r$ we may apply the definition of $r$-ampleness to any arc consisting of $r$ vertexes, see Figure
\ref{fig:gamma}. This reduces the length of the loop by $r-3$ and performing $\lceil\frac{n-r}{r-3}\rceil$ such operations we obtain a loop of length $\le r$ which can be filled by a single vertex. The number of internal vertexes of the bounding disc will be $\lceil\frac{n-r}{r-3}\rceil +1= \lceil\frac{n-3}{r-3}\rceil.$
\begin{figure}[h]
    \centering
    \includegraphics[scale = 0.6]{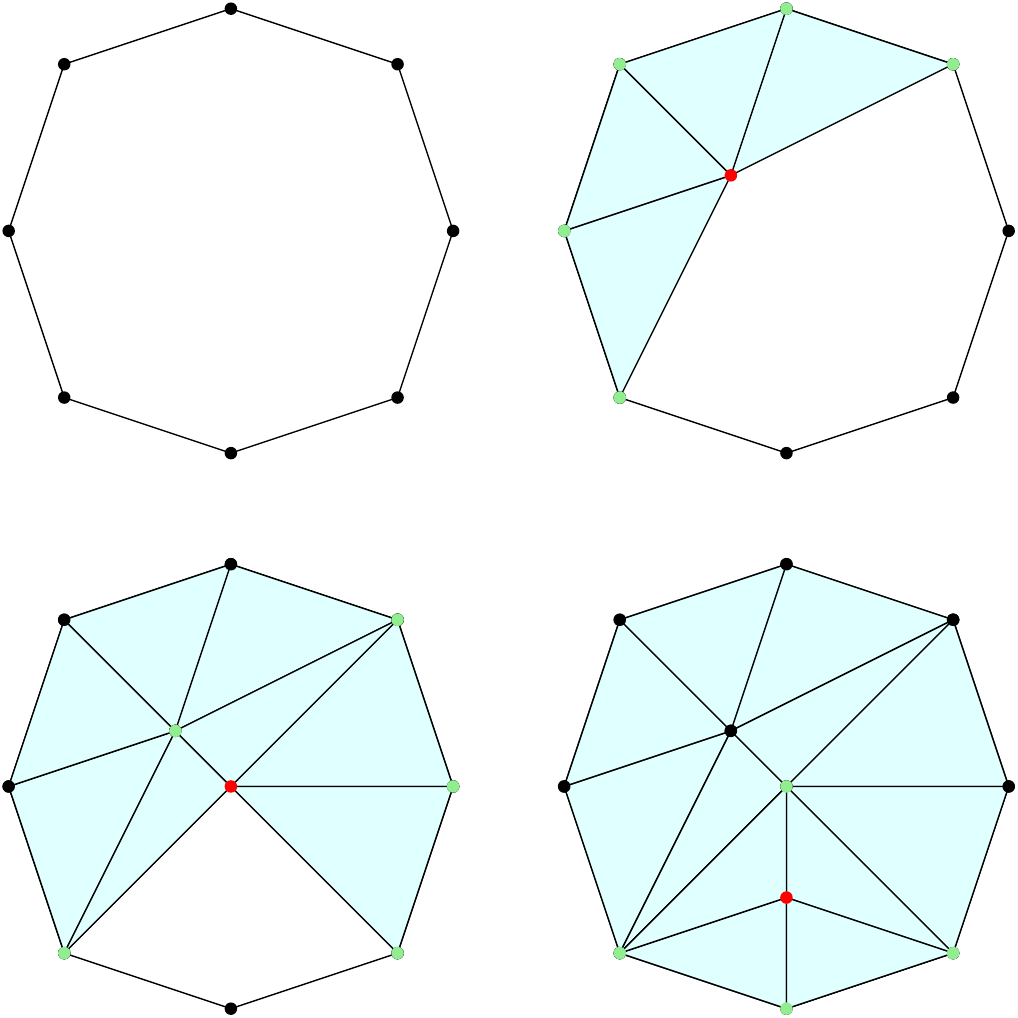}
    \caption{The process of constructing the bounding disc in a $5$-ample complex as detailed in the proof of Proposition~\ref{lem1cycle}}
    \label{fig:gamma}
\end{figure}
To estimate the number of triangles we note that on each intermediate step of the process described above we add $r-1$ triangles and on the final step we may add at most $r$ triangles. This leads to the upper bound $\lceil \frac{n-r}{r-3}\rceil \cdot (r-1) +r= 
\lceil \frac{n-3}{r-3}\rceil \cdot(r-1)+1$. 
\end{proof}

Currently we have no examples of $3$-ample complexes which are not simply connected. 
The 2-ample complex of Example \ref{thirteen} is not simply connected. 

Next we state our result on 2-connectivity of $r$-ample complexes:

\begin{theorem}\label{2conn}
For $r\ge 18$, every $r$-ample simplicial complex is 2-connected. 
\end{theorem}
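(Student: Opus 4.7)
The plan is to show, by induction on $n = |V(S)|$, that every simplicial map $\alpha \colon S \to Y$ from a triangulated 2-sphere $S$ extends to a simplicial $\beta \colon B \to Y$ from a triangulated 3-ball $B$ with $\partial B = S$; combined with Proposition~\ref{lem1cycle}, this will give the 2-connectivity. For the base case $n \le r$, set $U = \alpha(V(S))$ and let $A = \alpha(S) \subseteq Y_U$ be the image subcomplex. By $r$-ampleness there exists $v \in V(Y) \setminus U$ with $\lk_Y(v) \cap Y_U = A$, so $A \subseteq \lk_Y(v)$. Then the abstract cone $B = v \ast S$ is a triangulated 3-ball with $\partial B = S$, and the map $\beta$ sending the apex to $v$ and equal to $\alpha$ on $S$ is simplicial, because each simplex $v\sigma$ maps to $v \cup \alpha(\sigma) \in Y$.

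For the inductive step $n > r$, the plan is to locate a sub-disc $D \subseteq S$, homeomorphic to $D^2$, with $|V(D)| \le r$ and at least two interior vertices (vertices of $D$ not on the boundary cycle $\partial D$). Applying $r$-ampleness to $U = \alpha(V(D))$ and $A = \alpha(D) \subseteq Y_U$ produces $v \in V(Y) \setminus U$ with $\lk_Y(v) \cap Y_U = A$. Introducing a fresh abstract vertex $v'$, form the new 2-sphere
\[
S' \;=\; \bigl(S \setminus \operatorname{int} D\bigr) \,\cup\, (v' \ast \partial D),
\]
together with the map $\alpha' \colon S' \to Y$ agreeing with $\alpha$ on $S \setminus \operatorname{int} D$ and sending $v' \mapsto v$. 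Since $\alpha(\partial D) \subseteq A \subseteq \lk_Y(v)$, each triangle $v'xy$ of $v' \ast \partial D$ is sent to the triangle $v\,\alpha(x)\,\alpha(y) \in Y$, so $\alpha'$ is simplicial; and $|V(S')| = n - |V(D) \setminus V(\partial D)| + 1 < n$, enabling the induction, which produces $\beta' \colon B' \to Y$ with $\partial B' = S'$. Independently, the abstract join $v' \ast D$ is a triangulated 3-ball with boundary $D \cup (v' \ast \partial D)$, and the map $\beta'' \colon v' \ast D \to Y$ defined by $v' \mapsto v$ and $\beta''|_D = \alpha$ is simplicial because $\alpha(D) = A \subseteq \lk_Y(v)$. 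Gluing $B = B' \cup (v' \ast D)$ along the common sub-disc $v' \ast \partial D$ yields a triangulated 3-ball with $\partial B = S$, and $\beta = \beta' \cup \beta''$ is the required extension.

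The main obstacle is the combinatorial lemma guaranteeing the existence of such a disc patch $D$ for $r \ge 18$. A natural candidate is the closed double-star of an edge, $D = \mathrm{star}_S(u) \cup \mathrm{star}_S(v)$ for an edge $uv$ of $S$, which is a 2-disc with exactly $\{u, v\}$ as interior vertices and $|V(D)| = \deg_S(u) + \deg_S(v) - 2$. Euler's formula forces $\sum_w (\deg_S(w) - 6) = -12$ in any triangulated 2-sphere, ensuring the presence of many low-degree vertices, and a Kotzig-type analysis of ``light edges'' in planar triangulations bounds $\min_{uv \in E(S)} (\deg_S(u) + \deg_S(v))$ by a small absolute constant. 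The specific bound $r \ge 18$ then reflects the worst case of this combinatorial estimate, and verifying uniformly that such an edge exists in every triangulated 2-sphere with $n > 18$ vertices is the principal technical challenge; everything else in the argument is formal.
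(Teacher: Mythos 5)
Your overall architecture---induct on $|V(S)|$, cone off a small sub-disc $D\subseteq S$ with at least two interior vertices, and recurse on the smaller sphere $S'$---is formally coherent and genuinely different from the paper's argument, which instead takes a vertex-minimal essential sphere and derives a contradiction. The base case and the gluing step are fine. But the proof is incomplete at its crux, and you have slightly misidentified where the difficulty lies: it is not in finding an edge $uv$ with small degree sum (Kotzig's theorem supplies $\deg u+\deg v\le 13$ in every planar triangulation), but in your unsupported assertion that the closed double star $\overline{\mathrm{st}}_S(u)\cup\overline{\mathrm{st}}_S(v)$ \emph{is a disc with interior vertices exactly $\{u,v\}$}. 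That holds only when $\lk_S(u)\cap\lk_S(v)$ consists of just the two apexes $\alpha,\beta$ of the two triangles containing $uv$. If there is any further common neighbour $x$ (equivalently, a non-facial $3$-cycle $uxv$), the two link circles meet at $x$, the union of the stars is pinched there, its ``boundary'' is not a simple closed curve, and the surgery producing $S'$ breaks down. Such configurations occur in arbitrarily large triangulated spheres, and neither Euler's formula nor a light-edge count excludes them, since those control degrees rather than common neighbourhoods.

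This is not a cosmetic issue; it is exactly the point where the paper's proof does its real work. There, the minimality of the essential sphere yields the ``roundness'' property (every separating loop of length $\le 18$ has a side with at most one interior vertex), and only this extra leverage lets the authors bound $|\lk_\Sigma(v)\cap\lk_\Sigma(w)|\le 4$ and control the neighbourhood $U$ of the light edge. Even then they do not produce the disc your induction needs: they conclude $|V(\Sigma)|\le 21$ and then exhibit a short separator violating roundness, i.e.\ a contradiction rather than a filling. To make your direct induction run you would need a standalone combinatorial lemma---every triangulated $2$-sphere on more than $18$ vertices contains a subcomplex homeomorphic to a disc with at most $18$ vertices and at least two interior ones---and that statement is neither obviously true nor established by the double-star candidate. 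Until such a lemma is proved, the inductive step has no engine. (If it could be proved, your approach would have the advantage of being constructive, giving explicit bounds on the size of the bounding $3$-ball in the spirit of Proposition~\ref{lem1cycle}; but as written the gap is genuine.)
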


In the proof (which is given below) we shall use the following property of triangulations of the 2-sphere. 
By the degree $d_v$ of a vertex $v$ of a triangulation $\Sigma$ of $S^2$ 
we understand the number of edges incident to $v$.
\begin{lemma}\label{11}
In any triangulation $\Sigma$ of the 2-dimensional sphere there exist two adjacent vertexes $v$ and $w$ satisfying
$d_v\le 11$ and $d_w\le 11$. 
\end{lemma}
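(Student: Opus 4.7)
The plan is to argue by contradiction. Suppose no edge of the triangulation $\Sigma$ has both endpoints of degree $\le 11$, and let
\[L=\{v\in V(\Sigma):d_v\le 11\},\qquad H=V(\Sigma)\setminus L.\]
The contradiction hypothesis says precisely that $L$ is an independent set in the $1$-skeleton, i.e.\ every edge of $\Sigma$ has at least one endpoint in $H$. I will derive incompatible upper and lower bounds on $|H|$.

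First I would extract a degree-sum inequality from Euler's formula. Writing $V=|V(\Sigma)|$ and $E$ for the number of edges, the standard computation for a triangulation of $S^2$ gives $\sum_v d_v = 2E = 6V-12$. Every vertex of $\Sigma$ has $d_v\ge 3$ and every $v\in H$ has $d_v\ge 12$, so
\begin{equation*}
6V-12\;=\;\sum_v d_v\;\ge\;3\bigl(V-|H|\bigr)+12|H|,
\end{equation*}
which rearranges to $|H|\le (V-4)/3$.

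Next I would exploit the local structure coming from the independence of $L$. For each $v\in L$ all neighbours of $v$ lie in $H$, and $\lk_\Sigma(v)$ is a cycle of length $d_v$ whose edges all belong to the subgraph induced on $H$. Double-counting pairs $(v,e)$ with $v\in L$ and $e$ an edge of $\lk_\Sigma(v)$: the left count is $\sum_{v\in L}d_v$, and since every edge of a triangulated closed surface lies in exactly two $2$-faces, each edge $e\subseteq H$ contributes at most $2$. Hence $\sum_{v\in L}d_v\le 2E_{HH}$, where $E_{HH}$ is the number of edges induced on $H$. The subgraph on $H$ is planar, so (for $|H|\ge 3$) $E_{HH}\le 3|H|-6$, giving
\begin{equation*}
3\bigl(V-|H|\bigr)\;\le\;\sum_{v\in L}d_v\;\le\;6|H|-12,
\end{equation*}
that is, $|H|\ge (V+4)/3$.

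Combining the two bounds yields $(V+4)/3\le |H|\le (V-4)/3$, which is impossible. The degenerate cases $|H|\in\{0,1,2\}$ are easy to dispose of directly: they force $V\le 2$, incompatible with the fact that any triangulation of $S^2$ has at least $4$ vertices (and if $V=4$ the tetrahedron has every vertex of degree $3$, so the lemma is trivial). The main step to verify carefully is the link-counting inequality $\sum_{v\in L}d_v\le 2E_{HH}$, which genuinely uses that links in a closed triangulated surface are cycles and that each edge sits in exactly two triangles; everything else is bookkeeping with Euler's formula and planarity.
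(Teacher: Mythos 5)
Your proof is correct. It shares its skeleton with the paper's argument -- proof by contradiction with the same degree threshold, and your first estimate $|H|\le (V-4)/3$ is exactly equivalent to the paper's discrete Gauss--Bonnet step, which is phrased as $\sum_v(1-\tfrac{d_v}{6})=2$ and yields $|A|\ge\tfrac{2}{3}(V+2)$ for the set $A$ of vertices of degree $\le 11$ -- but the second half is genuinely different. The paper finishes by counting triangles: if $A$ were independent, every triangle would contain at most one vertex of $A$, and since each such vertex lies in at least $3$ triangles this forces $F\ge 3|A|\ge 2V+4$, contradicting $F=2V-4$. You instead double-count link edges of low-degree vertices inside the subgraph induced on the high-degree set $H$, using that each vertex link is a cycle and each $H$--$H$ edge lies in exactly two triangles, and then invoke planarity of that induced subgraph ($E_{HH}\le 3|H|-6$) to get the opposing bound $|H|\ge (V+4)/3$. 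Both are elementary Euler-formula bookkeeping; the paper's version is a bit leaner, since it needs neither the planarity bound on an induced subgraph nor a separate treatment of small $|H|$, whereas your route requires the degenerate cases $|H|\le 2$ (your dismissal is terse but fine: independence of $L$ would then give every $L$-vertex degree $\le 2<3$, so $L=\emptyset$ and $V\le 2$, impossible) and the closed-surface facts about links and triangle incidences, which of course hold for a triangulated $S^2$. Your key inequality $\sum_{v\in L}d_v\le 2E_{HH}$ is sound as stated.
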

\begin{proof}[Proof of Lemma \ref{11}] Recall that for any triangulation $\Sigma$ of the 2-sphere one has the following relation
\begin{eqnarray}\label{GB}
\sum_v \left(1-\frac{d_v}{6}\right) =2,
\end{eqnarray}
where $v$ runs over all vertexes of $\Sigma$ and $d_v$ denotes the degree of the vertex $v$. Formula (\ref{GB}) is well-known, it follows from the Euler's formula $V-E+F=2$ by observing that 
$E=\frac{1}{2} \sum_v d_v$ and $F= \frac{1}{3} \sum_v d_v$. Formula (\ref{GB}) can be viewed as a combinatorial version of the Gauss-Bonnet theorem. 

Let $A$ stands for the set of vertexes $v\in V(\Sigma)$ satisfying $d_v\le 11$ and let $B$ denote the complementary set consisting of vertexes with $d_v\ge 12$. Denote also 
$$C_A=\sum_{v\in A} \left(1-\frac{d_v}{6}\right), \qquad C_B=\sum_{v\in B} \left(1-\frac{d_v}{6}\right),$$
the contributions of both sets into the sum (\ref{GB}). 
Since $d_v\ge 3$ we have $1-\frac{d_v}{6}\le \frac{1}{2}$ and hence
$$C_A \le \frac{1}{2} |A|.$$  Besides, $1-\frac{d_v}{6}\le -1$ for $v\in B$ and therefore
$$ C_B \le -|B|, \qquad C_A +C_B =2, \qquad |A|+|B|=V.$$
From these relations one obtains 
\begin{eqnarray}\label{geq}
|A|\ge \frac{2}{3}(V+2).
\end{eqnarray}
Next we claim that there must exist an edge $e$ with both endpoints in $A$, i.e. having degree $\le 11$. Assuming the contrary, every triangle of the triangulation $\Sigma$ would have at most one vertex of degree $\le 11$ and since the minimal degree is $3$, using (\ref{geq}), we obtain that the number of triangles would be at least $$3\cdot \frac{2}{3}(V+2)=2V+4.$$ However this contradicts the well-known relation $F=2V-4$ for the total number of triangles. 
\end{proof}

We shall also need the following simple Lemma:

\begin{lemma}\label{connect}
Let $D$ be a triangulated $2$-dimensional disk and let $L=\partial D$ be its boundary circle. Assume that the length  
(i.e. the number of edges) of $L$ is at least $7$ and $D$ has at most one internal vertex. 
Then there exists a pair of boundary vertexes $x, y\in L$ satisfying $d_L(x, y)\ge 3$ and such that they 
can be connected by a simplicial simple arc $\alpha$ in $D$ with 
$\partial \alpha = \{x, y\}= \alpha\cap \partial D$.
Here $d_L(x, y)$ denotes the distance between $x$ and $y$ along the boundary $L$, i.e. the number of edges in the shortest simplicial path in $L$ connecting $x$ and $y$. 
\end{lemma}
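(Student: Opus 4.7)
The plan is to exhibit the arc $\alpha$ either as a single diagonal edge of $D$ (an internal edge with both endpoints on $L$) or as a length-$2$ path through the unique internal vertex of $D$. The proof splits on whether $D$ has an internal vertex.

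First suppose $D$ has no internal vertex, so $D$ triangulates an $n$-gon with $n\ge 7$. Its dual graph is a tree $T$ on $n-2$ nodes with $n-3$ edges, each edge of $T$ corresponding to a diagonal of $D$. A diagonal has $d_L=2$ exactly when removing its dual edge cuts off a single leaf of $T$. If every diagonal had $d_L=2$, then every edge of $T$ would be incident to a leaf, forcing $T$ to have diameter at most $2$ and hence to be a star; but the centre of such a star is a triangle, with at most $3$ neighbours in $T$, so $n-3\le 3$, contradicting $n\ge 7$. Hence some diagonal has $d_L\ge 3$, and it serves as $\alpha$.

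Now suppose $D$ has a unique internal vertex $p$. If two neighbours $u,v$ of $p$ satisfy $d_L(u,v)\ge 3$, then $u$-$p$-$v$ is the required arc. Otherwise the neighbours of $p$ are pairwise at $L$-distance $\le 2$; since any three vertices of an $n$-cycle ($n\ge 5$) at pairwise distance $\le 2$ are three consecutive ones, and since $d_p\ge 3$, we must have $d_p=3$ with $p$ joined to three consecutive boundary vertices $v_{j-1},v_j,v_{j+1}$. The link edge $v_{j-1}v_{j+1}$ is then a diagonal of $D$, but only at $L$-distance $2$. Delete the star of $p$ to obtain a triangulated disc $D'$ with no internal vertex whose boundary $L'$ of length $n-1$ is obtained from $L$ by replacing the two edges at $v_j$ by the single edge $v_{j-1}v_{j+1}$. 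Every arc in $L$ either coincides with the corresponding arc in $L'$ or contains the extra vertex $v_j$ (and is then one edge longer), so $d_L(x,y)\ge d_{L'}(x,y)$ for all $x,y\in V(L')$.

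If $n\ge 8$ then $|L'|\ge 7$, and the first case applied to $D'$ provides a diagonal with $d_{L'}\ge 3$, hence with $d_L\ge 3$, and we are done. The main difficulty is the boundary case $n=7$, where $D'$ is a triangulated hexagon and the first case need not apply: the two ``inscribed triangle'' triangulations of the hexagon have every diagonal at $d_{L'}=2$. One finishes by direct inspection, using the following observation: a short $L'$-diagonal whose length-$2$ arc in $L'$ traverses the replaced edge $v_{j-1}v_{j+1}$ becomes a length-$3$ arc in $L$ (because $L$ inserts $v_j$ into that arc), giving $d_L=3$. In each of the two exceptional hexagon triangulations exactly one of the three diagonals has this property, supplying the required good diagonal of $D$.
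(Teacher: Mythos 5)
Your proof is correct, and while it follows the same broad skeleton as the paper (split on whether $D$ has zero or one internal vertex; in the latter case either route a $2$-path through the internal vertex or cut down to the no-internal-vertex case), the working parts are genuinely different. For the polygon case the paper cuts off ears to reduce to $|L|=7$ and then disposes of the heptagon by inspecting explicit triangulations, whereas you argue via the dual tree: all diagonals at boundary distance $2$ would force every dual edge to meet a leaf, hence a star, whose centre triangle has dual degree at most $3$, giving $n\le 6$ — a clean uniform argument with no case-checking. For the one-internal-vertex case the paper picks three neighbours $a,b,c$ of the internal vertex, runs a distance trichotomy, cuts along a $2$-arc through the vertex so the remaining disk still has boundary length $\ge 7$, and patches the possibility that a found endpoint is the internal vertex; you instead show the only problematic configuration is a degree-$3$ internal vertex joined to three consecutive boundary vertices, delete its star to get a polygon with boundary length $n-1$, and pay for this with a separate analysis of the hexagon when $n=7$, where the two inscribed-triangle triangulations are handled by noting that the diagonal whose short arc crosses the replaced edge $v_{j-1}v_{j+1}$ gains the extra vertex $v_j$ in $L$ and so reaches distance $3$. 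Your route buys a fully combinatorial polygon case; the paper's route buys avoidance of the hexagon exception at the cost of a fiddlier neighbour analysis.

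Two small points to tighten. The parenthetical claim that three vertices of an $n$-cycle at pairwise distance $\le 2$ must be consecutive is false for $n=5$ and $n=6$ (e.g.\ vertices $0,2,4$ of a hexagon); it is true for $n\ge 7$, which is all you use, so state it that way, and note explicitly that $d_p\ge 4$ is impossible since four vertices every three of which are consecutive would contain a pair at distance $3$. Also, the assertion that deleting the star of $p$ removes $v_j$ and yields a disk with boundary $L'$ tacitly uses that every triangle at $v_j$ contains $p$; this does follow — the link of $v_j$ is a path from $v_{j-1}$ to $v_{j+1}$ through $p$, and the path-neighbours of $p$ must be neighbours of $p$, hence the link is exactly $v_{j-1}$--$p$--$v_{j+1}$ — but it deserves a sentence.
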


\begin{proof} Let us first consider the case when $D$ has no internal vertexes. 
Denoting the length $|L|$ of the boundary by $n$, we see that there are $n-3$ internal arcs (as follows from the Euler's formula). We want to show that there exists an internal arc such that its end points $x, y$ satisfy 
$d_L(x, y)\ge 3$. Assuming that $d_L(x, y)= 2$ for any internal arc, we may
cut $D$ along an arbitrary internal arc which produces a triangle and a triangulated disk $D'$ with $|L'|=n-1$ where 
$L'=\partial D'$. If we knew that our statement was true for $D'$ we could find vertexes $x, y\in L'$ satisfying 
$d_{L'}(x, y)\ge 3$ such that $x, y$ are the endpoints of an internal arc of $D'$. Then $d_L(x,y)\ge d_{L'}(x, y) \ge 3$. 
This argument shows that without loss of generality we may assume that the length of $L$ is exactly $7$ but in this case 
one can see that our statement holds by examining a few explicit cases; see left part of Figure \ref{fig:triangulation}. 

Consider now the case when $D$ has a single internal vertex, denoted $v$. The vertex $v$ is connected to at least $3$ other vertexes $a, b, c\in L$. Let $d_L(a, c)$ be the maximal among the three numbers $d_L(a, b)$, $d_L(a, c)$, 
$d_L(b, c)$. Then either $d_L(a, b) + d_L(b, c) +d_L(a,c) = |L|$ or $d_L(a, c) = d_L(a, b) +d_L(b,c)$. 
In the first case one obtains $d_L(a, c)\ge 4$ (since $|L|\ge 7$) and we are done, as we can take for $\alpha$ the arc $av+vc$. In the second case we may similarly treat the case $d_L(a, b)\ge 3$ and we are left with the possibility 
$d_L(a, b)=2$ and hence $d_L(a, c)=1$ and $d_L(c, b)=1$. Cutting along the arc $av+vc$ produces two triangulated disks, each with no internal vertexes, one having 4 vertexes and the other, denoted $D'$, having $|L|$ vertexes. We see that $|\partial D'|\ge 7$ and hence we may apply the previous case of the Lemma, i.e. we can find two vertexes 
$x, y\in \partial D'=L'$ connected by an internal arc such that $d_{L'}(x, y)\ge 3$. We are done if none of the points 
$x, y$ equal $v$. However if $x=v$ we may consider the pair $y, \ b\in L$ since $d_L(y, b)=d_{L'}(y, v) \ge 3$ and the points 
$y$, $b$ are connected by the arc $\alpha = yv + vb$. See Figure \ref{fig:triangulation}, right.
\end{proof} 
\begin{figure}[h]
    \centering
    \includegraphics[scale = 0.5]{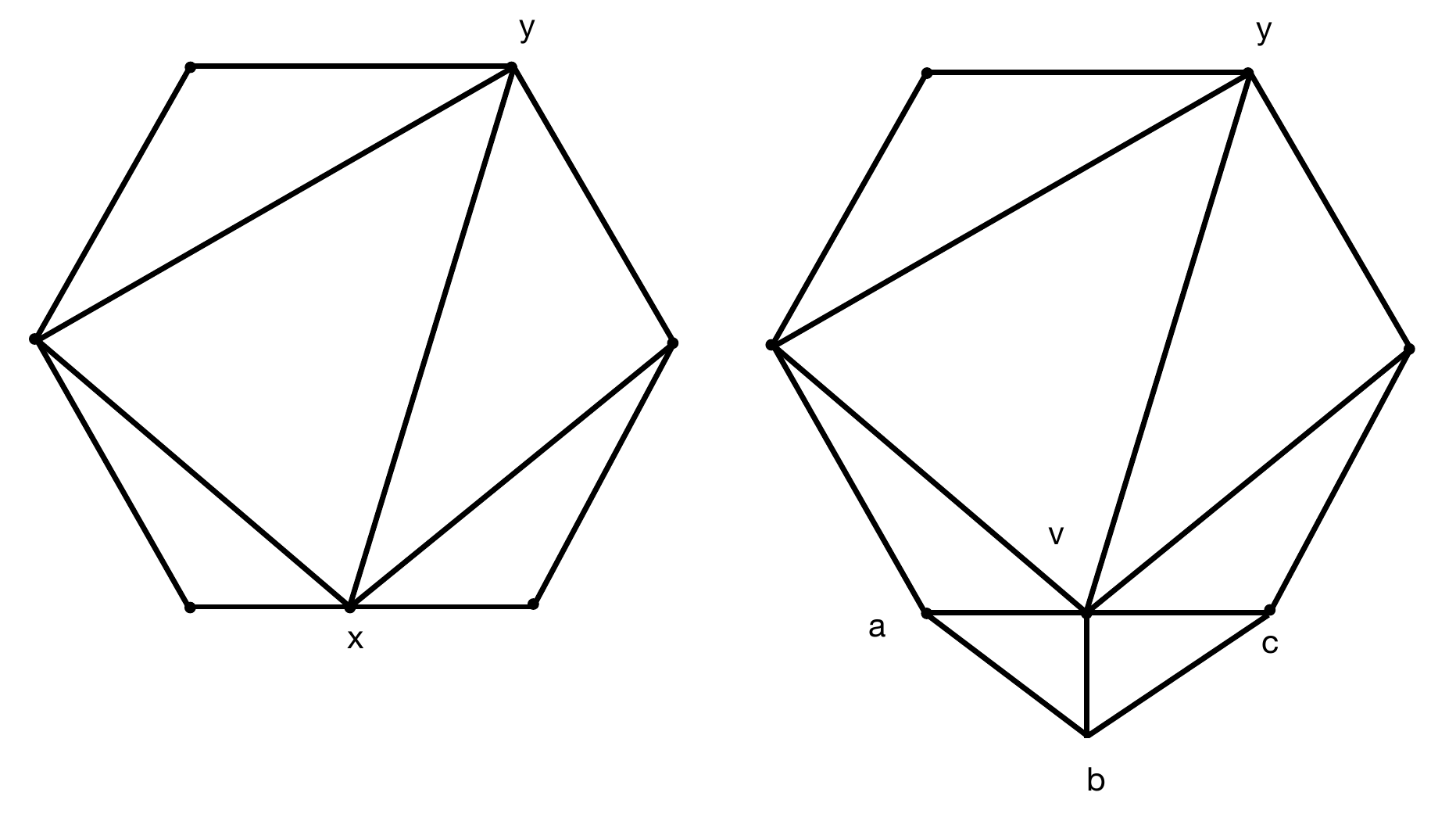}
    \caption{Triangulated disks with no internal vertexes (left) and one internal vertex (right).}
    \label{fig:triangulation}
\end{figure}
\begin{proof}[Proof of Theorem \ref{2conn}] We shall assume the contrary and arrive to a contradiction. Let $Y$ be an $18$-ample simplicial complex which is not 2-connected. From Proposition \ref{lem1cycle} we know that $Y$ is simply connected. 
Let $M(Y)$ denote the smallest number of vertexes in a triangulation $\Sigma$ of the sphere $S^2$ admitting a simplicial 
essential (i.e. not null-homotopic) map $f: \Sigma \to Y$. By the well-known Simplicial Approximation Theorem, 
$M(Y)$ is finite. Lemma \ref{cone} implies $V(\Sigma)\ge 19$ for every simplicial essential map $f:\Sigma\to Y$ and hence 
$M(Y)\ge 19$. 


Let $f:\Sigma\to Y$ be an essential simplicial minimal map, i.e.  $V(\Sigma)=M(Y)$. 
We shall use the following geometric property of the triangulation $\Sigma$ of $S^2$, its {\it \lq\lq roundness\rq\rq},\, which is described below. 
Suppose that $L\subset \Sigma$ is a simple simplicial loop of
length $|L|\le 18$, i.e. $L$ contains at most 18 edges. 
Clearly, $L$ divides the sphere $\Sigma$ into two triangulated disks $D_1$ and $D_2$; each of these disks having 
$|L|$ boundary vertexes and possibly a number of internal vertexes. We claim that {\it at least one of the disks $D_1$,\ $D_2$ has at most one internal vertex}. Indeed, suppose that each of the disks $D_1$ and $D_2$ has $\ge 2$ internal vertexes. Let $D=a\ast L$ be the cone with apex $a$ and base $L$. We can form two triangulated spheres 
$\Sigma_1 = D_1\cup D$ and $\Sigma_2=D_2\cup D$ and each of these spheres has strictly smaller number of vertexes than $\Sigma$ (since $D$ has a single internal vertex and each of the disks $D_1$, $D_2$ has at least 2 internal vertexes). 
Next we observe that each of the spheres $\Sigma_1$ and $\Sigma_2$ can be mapped simplicially into $Y$ such that 
at least one of the maps $\Sigma_1\to Y$ or $\Sigma_2\to Y$ is essential. Indeed, 
consider the image $f(L)\subset Y$ of the loop $L$ in $Y$. It is a subcomplex with at most $18$ vertexes and by 
$18$-ampleness of $Y$ we can find a vertex $u\in V(Y)$ such that $u\ast f(L)\subset Y$. Now we may extend the map $f:\Sigma\to Y$ onto the disk $D=a\ast L$ by mapping $a$ onto $u$ and extending this map onto the cone by linearity. 
We obtain a simplicial map 
$g: \Sigma\cup D\to Y$ extending $f$ and the restrictions $g_1=g|_{\Sigma_1}: \Sigma_1\to Y$ and $g_2=g|_{\Sigma_2}: \Sigma_2\to Y$ are the desired simplicial maps. Since $\Sigma_1\cup \Sigma_2=\Sigma\cup D$ and 
$\Sigma_1\cap \Sigma_2= D$ is contractible, we see that $g$ is essential (as $f=g|_\Sigma$ is essential) and hence 
at least one of the maps $g_1$, $g_2$ is essential.  Thus, we arrive at a contradiction with the minimality of $f$. 

%
%
Next we invoke Lemma \ref{11} which gives us two adjacent vertexes $v$ and $w$ of $\Sigma$, each having degree at most $11$. Let  $e$ be the edge connecting $v$ and $w$. 
Consider the 
subset $U$ of the surface $\Sigma$ which is the union of all triangles incident to $e$. The boundary $\partial U$ is a closed curve (potentially with some identifications, see below) formed by $d_v +d_w-4\le 11+11-4 =18$ edges and the interior of $U$ is the union of $d_v+d_w-2$ triangles. 
\begin{figure}[h]
    \centering
    \includegraphics[scale = 0.4]{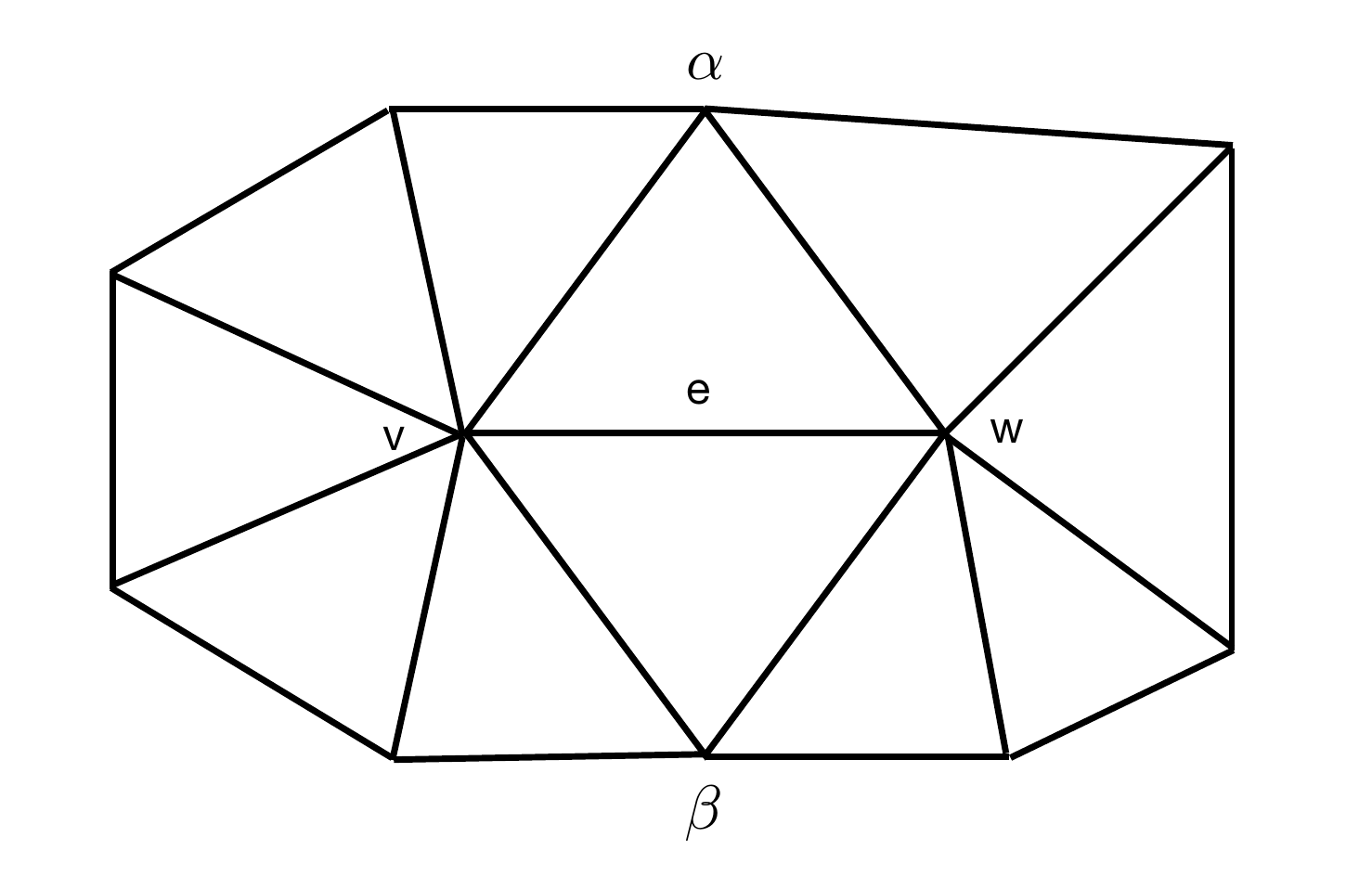}
    \caption{Triangles incident to an edge on the surface.}
    \label{fig:edge}
\end{figure}
The edge $e$ is incident to two triangles; we shall denote by $\alpha$ and $\beta$ the vertexes of these two triangles which are not incident to $e$, see Figure \ref{fig:edge}. 

Let us assume first that the links of the vertexes $v$ and $w$ satisfy $$\lk_\Sigma(v)\cap \lk_\Sigma(w) =\{\alpha, \beta\}.$$
Then $U$ is a triangulated disk with $\le 18+2=20$ vertexes, among them 2 are internal, as shown on Figure \ref{fig:edge}.

\begin{figure}[h]
    \centering
    \includegraphics[scale = 0.45]{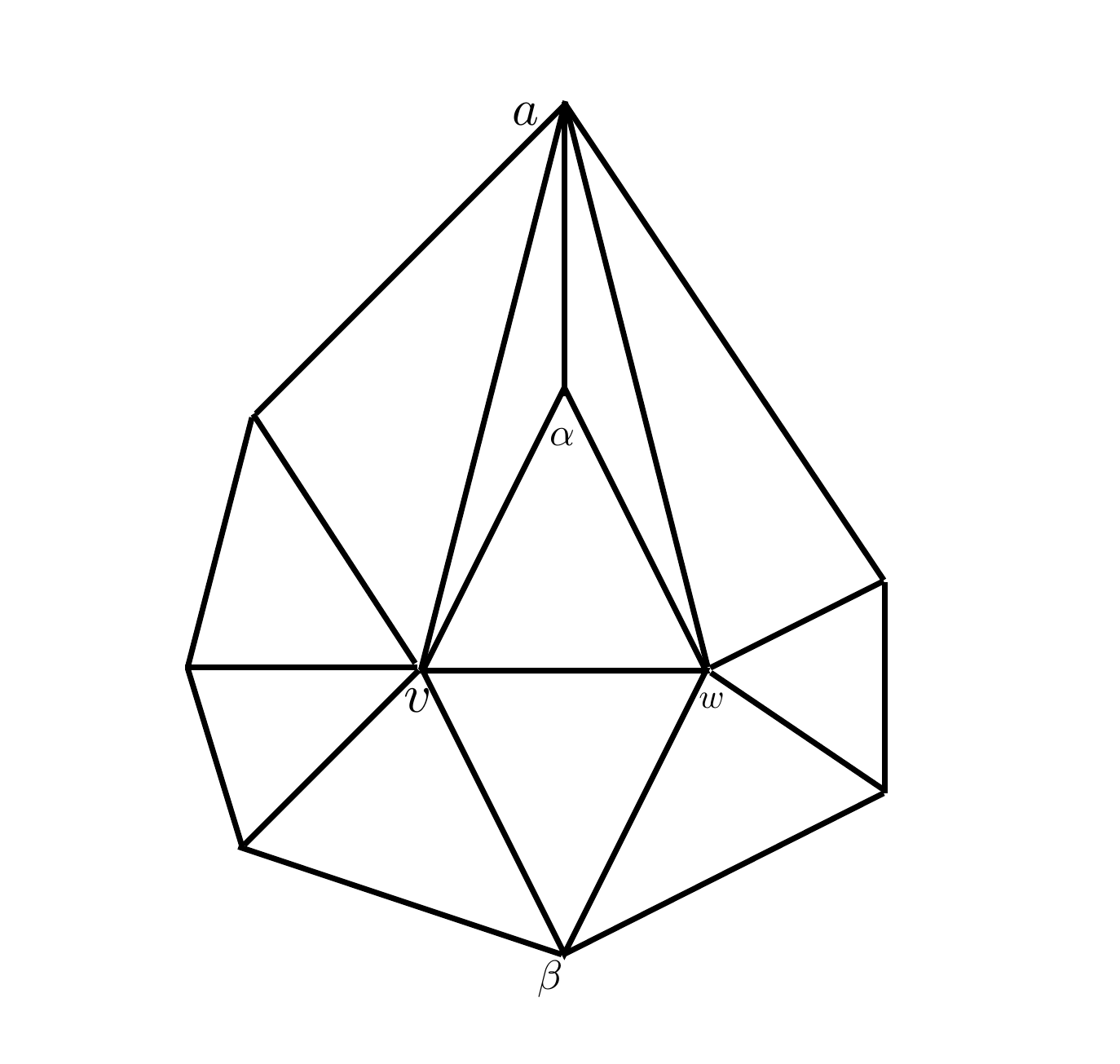}
    \caption{Disk $U$ with 3 internal vertexes.}
    \label{fig:edge2}
\end{figure}

Suppose now that there exists a vertex $a\in\lk_\Sigma (v)\cap \lk_\Sigma(w)$ which is distinct from $\alpha$ and $\beta$. 
Then the
path $L =av+vw+wa$ is a simplicial loop on $\Sigma$ which divides the surface $\Sigma$ into two disks. By the roundness property of $\Sigma$, one of these two disks must have at most one internal vertex. 
In fact, the only possibility is that $L$ 
bounds a disk with one internal vertex and 
 $L$ cannot be the boundary of a triangle: otherwise the edge $e$ would belong to 3 different triangles. 
 It is easy to see that this internal vertex must be either $\alpha$ or $\beta$, 
as there are exactly 2 triangles incident to $e$, 
see Figure \ref{fig:edge2}. In this case $\alpha$ becomes an internal vertex of $U$. 

For similar reasons it might happen that both vertexes $\alpha$ and $\beta$ are internal vertexes of $U$. 

The argument above shows that any vertex lying in $\lk_\Sigma(v)\cap \lk_\Sigma(w)$, which is distinct from $\alpha$ and $\beta$, belongs to a triangular simplicial loop surrounding either 
$\alpha$ or $\beta$ and containing the edge $vw$ (similarly the loop $av+ vw+wa$ shown on Figure \ref{fig:edge2}). This implies that the intersection $\lk_\Sigma(v)\cap \lk_\Sigma(w)$ may contain at most 4 vertexes. 

Potentially it might happen that $U=\Sigma$, i.e. $\partial U=\emptyset$. Then all vertexes of $\Sigma$, other than $v, \ w$, lie in the intersection $\lk_\Sigma(v)\cap \lk_\Sigma(w)$. Using the above arguments, we see that in this case $V(\Sigma)\le 6$, which contradicts our assumption $V(\Sigma)\ge 19$.

The remaining possibility is that $U\subset \Sigma$ is a subcomplex, it has either $2$, $3$ or $4$ internal vertexes and its total number of vertexes is at most $ 20$. 

The closure of the complement of $U$ is in $\Sigma$ another disk, $U'$, and applying the roundness property of 
$\Sigma$, we conclude that that $U'$ has at most one internal point. 
Thus, we see that the triangulation $\Sigma$ must have at most $21$ vertexes in total, and using Corollary \ref{cone} we obtain that $|V(\Sigma)|$ must be equal to one of the three numbers: $19$, $20$ or $21$. 

Using this observation we conclude that the length $\ell= |L|$ of the boundary $L=\partial U=\partial U'$ should satisfy 
$14 \le \ell \le 18.$ 

Finally we show that there must exist a simplicial simple closed curve $L'$ on $\Sigma$ dividing the sphere 
$\Sigma$ into two disks, each having more than one internal points, and this violates the roundness of $\Sigma$ and gives a contradiction. The curve $L'$ is the union of two arcs $L'=A\cup A'$ where $A\subset U$ and $A'\subset U'$. 
We first construct the arc $A'\subset U'$; we only must ensure that (c) the endpoints of $A'$ divide the boundary $L$ into two arcs, 
each of length $\ge 3$. The existence of such an arc follows from Lemma \ref{connect} below. 
Once the arc $A'\subset U'$ satisfying (c) is constructed we connect its endpoints (lying on the boundary $L=\partial U$) by a simple simplicial arc $A$ in $U$;  it is clear from Figures \ref{fig:edge} and \ref{fig:edge2} that any two points on the boundary can be connected by such an arc in $U$.  

The vertexes of $L$ distinct from two vertexes $\partial A=\partial A'$ are internal vertexes of the disks on which 
the sphere $\Sigma$ is divided by the circle $L'$; the condition (c) ensures that at least two vertexes lie in each connected components of $\Sigma - L'$. This contradicts the roundness property of $\Sigma$ and completes the proof of Theorem 
\ref{2conn}
\end{proof}

 We tend to believe that in general, for every $k\ge1$ there exists $r(k)$ such that every $r$-ample simplicial complex is $k$-connected provided that $r\ge r(k)$. We know that $r(1) \le 4$ and $r(2)\le 18$.

\section{Large random simplicial complex is ample}\label{5}

In this section we show that a large random simplicial complex is $r$-ample with probability tending to 1. 
This result implies the existence of $r$-ample finite simplicial complexes, we also use it to estimate the minimal number of vertexes an $r$-ample complex must possess. 

First we recall a model of random simplicial complexes developed in \cite{farber2019lower} and 
\cite{farber2020random}; it 
is a generalisation of the well-known Linial - Meshulam model. 

Let $\Delta_n$ denote the simplex on the vertex set $[n]=\{1, 2, \dots, n\}$. Suppose that for every simplex 
$\sigma\subset \Delta_n$ we are given a probability parameter $p_\sigma\in [0,1]$. We shall use the notation $q_\sigma=1-p_\sigma$. 
For a subcomplex $Y\subset \Delta_n$ we shall denote by $F(Y)$ the set of faces of $Y$ and by $E(Y)$ the set of 
{\it external} faces of $Y$, i.e. simplexes $\sigma\subset \Delta_n$ such that $\partial \sigma\subset Y$ and $\sigma
\not\subset Y$. 
Next we describe a probability measure on the set of all simplicial subcomplexes of $\Delta_n$; in terminology of \cite{farber2019lower} it is the lower measure (as opposed to an upper measure). 
The probability of simplicial subcomplex $Y\subset \Delta_n$ is given by 
\begin{eqnarray}\label{defprob}
P(Y) = \prod_{\sigma\in F(Y)} p_\sigma\cdot \prod_{\sigma\in E(Y)} q_\sigma.
\end{eqnarray}
The intuitive meaning of this probability measure can be described as follows. One constructs a random complex $Y$ inductively, by first selecting a random set of vertexes, adding randomly a set of edges, then a set of 2-simplexes and so on. 
On each step, once the $k$-dimensional skeleton $Y^{k}$ has been constructed, one adds external $(k+1)$-dimensional simplexes of $Y^k$ at random, each with its own probability $p_\sigma$, independently of the other simplexes. We refer to 
 \cite{farber2019lower} for more detail and justification. 
 
 In this section we shall assume that the parameters $p_\sigma$ are in {\it the medial regime}, which means that the probability parameters $p_\sigma$ satisfy
 \begin{eqnarray}\label{med1}
 p_\sigma\in [p, 1-p], 
 \end{eqnarray}
where $p\in (0,1)$ does not depend on $n$. (Note however that in Remark \ref{rem32a} we relax this assumption.) Paper \cite{farber2020random} contains information 
about geometry and topology of medial regime random simplicial complexes; in particular, it is shown that a random simplicial complex in the medial regime is simply connected and has vanishing Betti numbers below dimension
$\sim \log\log n,$ asymptotically almost surely.

\begin{prop}\label{randoma}
For every integer $r\ge 1$, the probability that a medial regime random simplicial complex is $r$-ample tends to one, as $n\to \infty$. 
\end{prop}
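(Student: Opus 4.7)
The plan is to apply a union bound over all relevant pairs $(U, A)$ together with a conditional independence argument over the candidate vertices $v\notin U$. First I would fix $r$ and observe that the number of subsets $U\subseteq [n]$ with $|U|\le r$ is $O(n^r)$, and that for each such $U$ the number of abstract simplicial complexes $A$ on $U$ is at most $M(r)+1 \le 2^{2^r}+1$, a constant in $n$. So if I can show that for each individual pair $(U,A)$ the probability of the bad event
\[
B_{U,A} \;=\; \{A\subseteq X_U\} \cap \{\forall v\in [n]\setminus U,\; \lk_X(v)\cap X_U \neq A\}
\]
is at most $(1-c)^{n-r}$ for some constant $c=c(r,p)>0$, then summing over the polynomially many $(U,A)$ gives $P(X \text{ not } r\text{-ample}) \le n^r \cdot 2^{2^r+1} \cdot (1-c)^{n-r} \to 0$.

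The core step is the bound on $P(B_{U,A})$. I would condition on the induced subcomplex $X_U = C$, assuming $A\subseteq C$ (otherwise $B_{U,A}$ is empty on that conditional). A key observation is that for distinct $v,v'\in[n]\setminus U$, the simplexes of $\Delta_n$ of the form $\tau\cup\{v\}$ with $\tau\subseteq U$ are disjoint from those of the form $\tau\cup\{v'\}$, so under the lower-measure model of \cite{farber2019lower} their presence/absence in $X$ is independent once $X_U$ is fixed. Thus given $X_U=C$ the events $\{\lk_X(v)\cap X_U = A\}$ for different $v\notin U$ are mutually independent.

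Next I would show that each such event has conditional probability at least some constant $c=c(r,p)>0$. Processing $\sigma\subseteq U$ in increasing dimension, the event $\lk_X(v)\cap X_U = A$ is determined by the following prescriptions: for each $\sigma\in A$, include $\sigma\ast\{v\}$ (whose boundary is forced to be present by induction, since $A$ is a complex and $A\subseteq C\subseteq X$), contributing a factor $p_{\sigma\ast\{v\}}\ge p$; for each $\sigma\in C\setminus A$ such that $\partial\sigma\subseteq A$, exclude $\sigma\ast\{v\}$ at the top level, contributing $q_{\sigma\ast\{v\}}\ge p$; all other $\sigma\in C\setminus A$ are handled automatically because one of their subfaces $\sigma'\ast\{v\}$ is already excluded by induction. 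Since there are at most $2^r$ such factors and the medial-regime assumption gives every factor bounded below by $p$, one obtains $P(\lk_X(v)\cap X_U = A\mid X_U=C) \ge p^{2^r} =: c$. By independence across the $\ge n-r$ candidate vertices, the probability that none works is at most $(1-c)^{n-r}$, and the claimed bound on $P(B_{U,A})$ follows.

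The main obstacle is the careful verification of the conditional probability lower bound, because $\lk_X(v)\cap X_U$ depends on the full downward-closed structure of the simplexes on $U\cup\{v\}$ and one must check that the inductive scheme above exhausts all constraints without over- or undercounting. Once this combinatorial bookkeeping is done, the union bound collapses the problem to the elementary estimate $n^r\cdot 2^{2^r}\cdot(1-c)^{n-r}\to 0$, concluding the proof. I would also note that the argument gives some quantitative rate, and that it is exactly the input needed for the existence statement in Proposition \ref{proba}.
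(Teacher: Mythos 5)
Your proposal follows essentially the same route as the paper's proof: condition on the induced subcomplex $X_U$, use conditional independence of the candidate vertices $v\notin U$, lower-bound each per-vertex success probability by $p^{2^r}$ via the medial-regime assumption, and conclude with the union bound $n^r\cdot 2^{2^r}\cdot\left(1-p^{2^r}\right)^{n-r}\to 0$. The only detail the paper makes explicit that you gloss over is the factor $p_v$ for the presence of the candidate vertex $v$ itself (and accordingly the treatment of $v\notin V(X)$ and of the a.a.s.\ nonemptiness of the random complex), a bookkeeping point your count of at most $2^r$ factors already accommodates.
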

\begin{proof}
We estimate probability that a random complex $Y$ is not $r$-ample. 
Let us make the following choices: 
a subset $U\subset [n]$ of cardinality $|U|\le r$, a subcomplex $Z\subset \Delta_n$ with $V(Z)=U$,
 a subcomplex $A\subset Z$  and a vertex $v\in [n]- U$.  Consider the following events
$$W_U = \{Y\subset \Delta_n\mid U\subset V(Y)\},$$ 
$$W_{U,Z}=\{Y\subset \Delta_n\mid \, U\subset V(Y), \, Y_U=Z\},$$
and 
$$W_{U,Z,A, v} = \{Y\subset \Delta_n; \, U\cup \{v\} \subset V(Y), \, Y_{U\cup\{v\}}=Z\cup (A \ast v)\}.$$
Note that $$W_U=\sqcup_Z W_{U,Z}$$ is the disjoint union where $Z$ runs over all subcomplexes of $\Delta_n$ satisfying $V(Z)=U$. Consider also the complement $W_{U,Z,A,v}^c$ of $W_{U,Z,A,v}$ in $W_{U, Z}$, i.e. 
$$W_{U,Z,A,v}^c = W_{U, Z} - W_{U,Z,A,v}.$$
A simplicial complex $Y\subset \Delta_n$ belongs to $W^c_{U,Z,A, v}$ iff $U\subset V(Y)$ and $Y_U=Z$ and either $v\notin V(Y)$ or $v\in V(Y)$ and 
$\lk_Y(v)\cap Y_U\not= A$. We see that for $|U|\le r$ any complex $Y$ lying in the intersection
$$\bigcap_{v\in [n]-U}W^c_{U,Z,A,v}$$
is not $r$-ample. Moreover, the set $\mathcal N$ of all not $r$-ample simplicial complexes $Y\subset \Delta_n$, $Y\not=\emptyset$, coincides with 
$$\mathcal N \, =\, \bigcup_{1\le |U|\le r} \left(\bigsqcup_{Z}\left(\bigcup_{A\subset Z}\left(\bigcap_{v\in [n]-U} W^c_{U,Z,A,v}\right)\right)\right).$$
We denote
$$\mathcal N_{U, Z} = \bigcup_{A\subset Z}\left(\bigcap_{v\in [n]-U} W^c_{U,Z,A,v}\right)$$
and $\mathcal N_{U} =\sqcup_Z \mathcal N_{U, Z}$. Then 
$\mathcal N= \cup_U \mathcal N_{U},$ where $|U|\le r$. 

Using  definition (\ref{defprob}) we can compute conditional probability 
$$P(W_{U,Z,A,v}\, |\, W_{U,Z})= p_v\cdot \prod_{\sigma\in F(A)} p_{v\sigma}\cdot \prod_{\sigma\in E(A|Z)}q_{v\sigma}.
$$
Here $v\sigma$ denotes the join $v\ast \sigma$ and $E(A|Z)$ denotes the set of simplexes $\sigma\in F(Z)-F(A)$ such that
$\partial \sigma\subset A$. Since $|F(A)|+|E(A|Z)|\le 2^r-1$ (note that by definition a simplex is {\it a nonempty} subset of the vertex set), using the medial regime assumption (\ref{med1}), we obtain
\begin{eqnarray}
P(W_{U,Z,A,v}\, \mid\, W_{U,Z}) \ge p^{2^r }.
\end{eqnarray}
Hence the complement $W_{U,Z,A,v}^c$ of $W_{U,Z,A,v}$ in $W_{U, Z}$ satisfies 
$$P(W_{U,Z,A,v}^c\mid W_{U,Z}) \le 1-p^{2^r }$$ and since 
for different vertexes $v\in [n]-U$ the events $W_{U,Z,A,v}^c$ are conditionally independent over 
$W_{U,Z}$ we obtain 
$$P\left(\bigcap_{v} \, W_{U,Z,A,v}^c \mid  W_{U,Z}\right) \le (1-p^{2^r })^{n-|U|}\le (1-p^{2^r })^{n-r}$$ 
and therefore
$$ P(\mathcal N_{U, Z}\mid W_{U,Z}) \, =\, P\left(\bigcup_A \bigcap_{v} \, W_{U,Z,A,v}^c \mid  W_{U,Z}\right) \le 2^{2^r}(1-p^{2^r })^{n-r},$$ 
where $A$ runs over subcomplexes of $Z$ (the number of such subcomplexes is clearly bounded above by $2^{2^r}$).

Since $\mathcal N_U = \sqcup_Z \mathcal N_{U,Z}$ and $\mathcal W_U = \sqcup_Z \mathcal W_{U,Z}$ 
we obtain  
$$P(\mathcal N_U)\le \max_Z P(\mathcal N_{U,Z}\mid W_{U,Z}) \le 2^{{2^r}} (1-p^{2^r})^{n -r}. $$
And finally, we obtain the following upper bound for the probability of the set 
$\mathcal N=\cup_U \mathcal N_{U}$ 
of all non-empty simplicial subcomplexes of $\Delta_n$ which are not $r$-ample: 
\begin{eqnarray}
P(\mathcal N) &\le& \sum_{j=1}^r {\binom n j} \cdot 2^{2^{r}}\left(1-p^{2^r }\right)^{n-r}\nonumber
\\  
&\le &
n^r\cdot 2^{2^{r}}\left(1-p^{2^r }\right)^{n-r}.\label{notample}\label{estima}
\end{eqnarray}
 Clearly, for $n\to \infty$ the 
 expression
 (\ref{estima}) tends to zero. Note also that the probability of the empty simplicial complex is bounded above by $(1-p)^n$ and tends to $0$.
 This completes the proof. 
\end{proof}

\begin{remark}\label{rem32a}
The above arguments prove that the conclusion of Proposition \ref{randoma} holds under a slightly weaker assumption, namely $p_\sigma\in [p, 1-p]$ where $p=p(n)>0$ satisfies
$$p^{{2^r }} = \frac{r\ln n +\mu}{n}$$
with $\mu=\mu(n)$ an arbitrary sequence tending to $\infty$. Examples satisfying the above condition are $p=1/n^\alpha$ with $\alpha\in (0, 2^{-r})$ and $p=1/\ln n$, the latter choice works for any $r$. 
\end{remark}

\begin{remark} \label{rm53}
The arguments of the proof of Proposition \ref{randoma} work without any change if one alters the medial regime assumption by requiring that $p_v=1$ 
for every vertex $v\in [n]$ while $p_\sigma\in [p, 1-p]$ for $\dim \sigma>0$. Formula (\ref{defprob}) implies that in this case the probability measure is supported on the set of simplicial complexes $Y\subset \Delta_n$ with $V(Y)=[n]$, i.e. having 
exactly $n$ vertexes. This observation will be used below in the Proof of Proposition \ref{proba}
\end{remark}

\begin{prop}
\label{proba}
For every $r \geq 5$ and for every $n \ge  r2^r2^{2^{r}}$, there exists an $r$-ample simplicial complex having exactly $n$ vertexes.
\end{prop}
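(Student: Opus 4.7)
The plan is to apply the probabilistic method, using the variant of the random simplicial complex model described in Remark \ref{rm53}. Specifically, set $p_v = 1$ for every vertex $v \in [n]$ and $p_\sigma = 1/2$ for every simplex $\sigma \subset \Delta_n$ with $\dim \sigma \ge 1$. By Remark \ref{rm53}, the resulting probability measure is supported entirely on simplicial subcomplexes $Y \subset \Delta_n$ satisfying $V(Y) = [n]$, i.e.\ on complexes having exactly $n$ vertexes, and the estimate (\ref{estima}) from the proof of Proposition \ref{randoma} continues to apply verbatim. It therefore suffices to show that the right-hand side of (\ref{estima}), evaluated at $p = 1/2$, is strictly less than $1$ for every $r \ge 5$ and every $n \ge r 2^r 2^{2^r}$; if so, the probabilistic method guarantees the existence of at least one complex in the support which is $r$-ample, and by construction it has exactly $n$ vertexes.

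With $p = 1/2$ the bound reads $n^r \cdot 2^{2^r} \cdot (1 - 2^{-2^r})^{n-r}$, and using $(1-x)^m \le e^{-mx}$ the desired inequality reduces, after taking logarithms, to
$$r \ln n + 2^r \ln 2 \;<\; (n-r) \cdot 2^{-2^r}.$$
Substituting $n = r 2^r 2^{2^r}$ makes the right-hand side essentially $r 2^r$, while the left-hand side, after expanding $\ln n = \ln r + r \ln 2 + 2^r \ln 2$, is dominated by $r \cdot 2^r \ln 2$. Since $\ln 2 < 1$, this gives the required inequality for all sufficiently large $r$, and a direct numerical check confirms the borderline case $r = 5$ (where the two sides differ by only a couple of units). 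For $n > r 2^r 2^{2^r}$ the inequality only improves: the function $r \ln n - (n-r) 2^{-2^r}$ has derivative $r/n - 2^{-2^r}$, which is negative whenever $n > r \cdot 2^{2^r}$, a condition comfortably satisfied below our threshold.

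The main obstacle is really the tightness of this arithmetic at small $r$. The competition between $r \cdot 2^r \ln 2$ on the left and $r \cdot 2^r$ on the right is precisely what forces the hypothesis $r \ge 5$ into the statement of the proposition; a weaker hypothesis would require a larger prefactor than $r 2^r$ in front of $2^{2^r}$, or a more careful treatment of the lower-order terms $r \ln r + r^2 \ln 2 + 2^r \ln 2$. Apart from this numerical check, every other step — the choice of the model, the application of (\ref{estima}), and the conclusion via the probabilistic method — is essentially mechanical.
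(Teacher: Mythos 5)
Your proposal is correct and follows essentially the same route as the paper: invoke Remark \ref{rm53} to fix $p_v=1$ (so the measure is supported on complexes with exactly $n$ vertexes), apply the bound (\ref{estima}) with $p=1/2$, and verify the resulting inequality at $n=r2^r2^{2^r}$ together with monotonicity in $n$ for $n>r2^{2^r}$. The only slight looseness is phrasing the check as ``all sufficiently large $r$ plus the borderline case $r=5$'' rather than noting that, after dividing by $2^r$, the error terms decrease in $r$ while the main term grows linearly, so verifying $r=5$ covers all $r\ge 5$ — the same level of detail as the paper's own ``easy to see'' step.
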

\begin{proof} The expression (\ref{estima}) is an upper bound of the probability that a medial regime random complex on $n$ vertexes 
is not $r$-ample. Clearly, if for some $n$ the RHS of (\ref{estima}) is smaller than 1 then an $r$-ample complex exists.  
The expression (\ref{estima}) is bounded above by 
$$n^r 2^{2^r}e^{-p^{2^r}\cdot(n-r)}= n^r e^{-np^{2^r}}\cdot 2^{2^r}e^{rp^{2^r}}$$
and taking the logarithm we obtain the following inequality 
\begin{eqnarray}\label{ineq3}
np^{2^r} -r\ln n > 2^r \ln 2+ rp^{2^r}
\end{eqnarray}
which guarantees the existence of an $r$-ample complex on $n$ vertexes. Below we shall set  $p=1/2$.
The function $n\mapsto np^{2^r} -r\ln n$ is monotone increasing for $n>r2^{2^r}$ and therefore we only need to show that (\ref{ineq3}) is satisfied for $n=r2^r2^{2^r}.$
The inequality (\ref{ineq3}) turns into 
$$r2^r(1-\ln 2) -r^2 \ln 2 -r\ln r> 2^r \ln 2+ r2^{-2^r}$$
which is equivalent to  
\begin{eqnarray}\label{ineq4}
r(1-\ln 2) -\ln 2 > \frac{r^2 \ln 2 +r\ln r}{2^r} + r2^{-2^r-r}. 
\end{eqnarray}
Given that $\ln 2\simeq 0.6931$ it is easy to see that (\ref{ineq4}) is satisfied for any $r\ge 5$. 
\end{proof}

%

\begin{remark}
\label{toosmall}
Even though a random simplicial complex with $2^{\Omega(2^r)}$ vertices is $r$-ample (as Proposition \ref{proba} claims), $2^{O(2^r/\sqrt{r})}$ vertices do not suffice; this follows from Corollary \ref{cor26} and formula (\ref{frml25}). 
%
\end{remark}
As a byproduct, we also obtain the following result about  2-connectivity of random simplicial complexes in the medial regime. 
\begin{cor}
Every medial regime random simplicial complex  \cite{farber2020random} is $2$-connected, asymptotically almost surely.
\end{cor}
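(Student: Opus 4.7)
The plan is to combine the two main results already established in the excerpt: the existence proposition for ampleness in the random model (Proposition~\ref{randoma}) and the structural theorem relating ampleness to $2$-connectedness (Theorem~\ref{2conn}). Since Theorem~\ref{2conn} guarantees that every $r$-ample simplicial complex is $2$-connected provided $r \ge 18$, it suffices to verify that a medial regime random simplicial complex is $18$-ample asymptotically almost surely.

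First, I would fix $r = 18$ once and for all. Then I would invoke Proposition~\ref{randoma} at this value of $r$: the proposition asserts that for every fixed integer $r \ge 1$, the probability that a medial regime random simplicial complex on $n$ vertexes is $r$-ample tends to $1$ as $n \to \infty$. In particular, letting $Y_n$ denote such a random complex, the event $E_n = \{Y_n \text{ is } 18\text{-ample}\}$ satisfies $\mathbb{P}(E_n) \to 1$.

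Next, I would apply Theorem~\ref{2conn}, which is a deterministic statement: any $18$-ample simplicial complex is $2$-connected. Consequently, $E_n$ is contained in the event $\{Y_n \text{ is } 2\text{-connected}\}$, and monotonicity of probability yields $\mathbb{P}(Y_n \text{ is } 2\text{-connected}) \to 1$. This is exactly the desired asymptotic almost sure statement.

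There is no real obstacle here, as both key ingredients have already been proved. The only subtlety worth flagging is that the quantitative estimate (\ref{estima}) in the proof of Proposition~\ref{randoma} depends on $r$ through the factor $2^{2^{r}}$ and the base $(1-p^{2^{r}})$, so the rate of convergence degrades rapidly with $r$; nevertheless, for the fixed value $r = 18$ this bound still tends to $0$ as $n \to \infty$, and the argument goes through unchanged. One could also record the stronger corresponding threshold permitted by Remark~\ref{rem32a}, but this is not needed for the stated conclusion.
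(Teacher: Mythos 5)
Your proposal is correct and matches the paper's own argument exactly: the paper deduces the corollary by combining Proposition~\ref{randoma} (for the fixed value $r=18$) with the deterministic Theorem~\ref{2conn}, just as you do. Spelling out the monotonicity-of-probability step and the remark about the $r$-dependent rate is fine but not a departure from the paper's route.
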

\begin{proof}
This follows from Proposition \ref{randoma} and Theorem \ref{2conn}. 
\end{proof}
Connectivity and simple connectivity of the medial regime random simplicial complexes and vanishing of the Betti numbers were shown in \cite{farber2020random}; the vanishing of the second homotopy group was not previously known.

\section{Explicit construction of ample complexes}
\label{6}

The random construction above shows the existence of $r$-ample simplicial complexes for every~$r$. However, it does not tell us how to construct an $r$-ample complex explicitly. In this section we define a deterministic family of complexes that are guaranteed to be $r$-ample.

Our construction uses ideas from number theory, and generalises the classical Paley graph. In Definitions~\ref{Qnp}-\ref{Xnp}, we introduce the \emph{Iterated Paley Simplicial Complex} $X_{n,p}$ on $n$ vertices, for every odd prime power $n$ and odd prime~$p$ dividing $(n-1)$. But first, we state the main theorem of the section.

\begin{theorem}
\label{amplepaley} 
Let $r \in \mathbb{N}$. Every Iterated Paley Simplicial Complex $X_{n,p}$ with $p>2^{2^r+2r}$ and $n>r^2p^{2r}$ is $r$-ample.
\end{theorem}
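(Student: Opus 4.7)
My approach is to establish $r$-ampleness by a counting argument based on Weil-type bounds for multiplicative character sums over $\mathbb{F}_n$. Fix a subset $U \subseteq V(X_{n,p})$ with $|U| = s \le r$ and a target subcomplex $A \subseteq (X_{n,p})_U$. I need to exhibit at least one vertex $v \in V(X_{n,p}) \setminus U$ with $\lk_{X_{n,p}}(v) \cap (X_{n,p})_U = A$.

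The first step is to translate membership conditions into multiplicative characters. Since $p \mid n-1$, the group $\mathbb{F}_n^{\ast}$ carries a multiplicative character $\chi$ of order $p$, and the iterated Paley construction is designed so that membership of a simplex $\sigma$ in $X_{n,p}$ is detected by the value $\chi(Q_\sigma)$ of a polynomial $Q_\sigma$ in the vertex coordinates of $\sigma$. Thus for each nonempty $T \subseteq U$, the demand ``$T \cup \{v\} \in X_{n,p}$ iff $T \in A$'' becomes a prescription $\chi(Q_T(v)) = \omega_T$ for a fixed $\omega_T \in \mu_p$ read off from $A$.

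Using character orthogonality, the count $N$ of valid vertices $v \in \mathbb{F}_n \setminus U$ can be written as
\begin{equation*}
N \;=\; \sum_{v \in \mathbb{F}_n \setminus U} \;\prod_{\varnothing \ne T \subseteq U} \;\frac{1}{p}\sum_{k_T=0}^{p-1} \omega_T^{-k_T}\,\chi\bigl(Q_T(v)\bigr)^{k_T}.
\end{equation*}
Expanding the product yields a main term, corresponding to all $k_T = 0$, of size approximately $(n-s)/p^{2^s - 1}$, plus at most $p^{2^s - 1}$ error terms, each of the form $(\text{scalar})\cdot\sum_v \chi(R(v))$ for a polynomial $R(v) = \prod_T Q_T(v)^{k_T}$ of degree controlled by $r$ and by $p$. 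By Weil's theorem, every such error sum is bounded by $(\deg R - 1)\sqrt{n}$, \emph{provided} $R$ is not a perfect $p$-th power in $\overline{\mathbb{F}_n}[v]$.

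The principal obstacle, as I see it, is the non-degeneracy claim: for every nontrivial tuple $(k_T)$, one must verify that $\prod_T Q_T^{k_T}$ fails to be a $p$-th power. This is where the iterated structure of $X_{n,p}$ pays off, since the construction is engineered precisely so that the polynomials $Q_T$ indexed by distinct $T$ behave like algebraically independent factors modulo $p$-th powers; once this is in place, Weil gives an error bound of the form $N \ge n/p^{2^r - 1} - C(r,p)\sqrt{n}$, with $C(r,p)$ polynomial in $p^{2^r}$ and $r$. The numerical hypotheses $p > 2^{2^r+2r}$ and $n > r^2 p^{2r}$ are then exactly what is needed to guarantee that the main term dominates the error with room to spare, forcing $N \ge 1$ and completing the proof.
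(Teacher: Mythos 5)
Your proposal founders on the non-degeneracy claim, and the gap is not a technicality: it is the central difficulty that the iterated construction exists to circumvent. By definition of $X_{n,p}$, the relevant polynomial for a face $T\cup\{x\}$ is, up to a constant from $\mathbb{F}_n^\times$, the product of linear factors $\prod_{v\in T}(x-v)$, because the Vandermonde determinant of $T\cup\{x\}$ factors as $\pm\Delta(T)\prod_{v\in T}(x-v)$. Hence the $2^r-1$ polynomials $Q_T$ are heavily multiplicatively dependent: $\prod_T Q_T^{k_T}$ equals a constant times $\prod_{v\in U}(x-v)^{e_v}$ with $e_v=\sum_{T\ni v}k_T$, and this is a $p$-th power (times a constant) for a huge family of nontrivial tuples $(k_T)$ --- namely whenever all $e_v\equiv 0\bmod p$. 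For all such tuples the corresponding character sum has absolute value close to $n$, not $O(\sqrt n)$, so your expansion does not have a single main term of size $n/p^{2^r-1}$; only $r$ of the $2^r-1$ prescriptions are independent, and the count is either $0$ (if the targets $\omega_T$ are inconsistent with the dependencies) or of order $n/p^r$. Relatedly, membership of a face in $X_{n,p}$ corresponds to the exponent landing in $Q_p$, a union of about $p/2$ cosets of $H$, not to a single character value $\omega_T$; choosing one admissible coset per $T$ consistently with the multiplicative dependencies is itself a nontrivial problem. Finally, even if non-degeneracy held, your claimed error bound forces $n\gg p^{\Theta(2^r)}$ for positivity, which the hypothesis $n>r^2p^{2r}$ does not remotely provide.

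The paper's proof resolves exactly this by a two-stage argument that your one-shot count cannot replicate. Writing $\alpha(\sigma x)=\alpha(\sigma)+\sum_{v\in\sigma}\alpha(vx)$ in $\mathbb{F}_p$, it introduces $r$ variables $\xi_v=\alpha(vx)$ and splits the problem into: (I) a system of $2^r-1$ quadratic-residue constraints on the sums $\alpha(\sigma)+\sum_{v\in\sigma}\xi_v$ over the small field $\mathbb{F}_p$, solved inductively vertex by vertex via the Weil-type Lemma with $m=2$ and $d\le 2^{r-1}$ (this is where $p>2^{2^r+2r}$ is spent, together with a careful argument that the shifts $\delta(\sigma)$ can be kept distinct); and (II) only $r$ coset conditions $(x-v)\in g^{\xi_v}H$ over $\mathbb{F}_n$, solved by the same lemma with $m=p$ and $d=r$ (this is where $n>r^2p^{2r}$ is spent). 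The exponential cost in $2^r$ is thus paid in $\mathbb{F}_p$ rather than in $\mathbb{F}_n$, which is the entire point of iterating the Paley motif. To repair your argument you would need to discover this reduction; as written, the approach fails.
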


Theorem~\ref{amplepaley} is proven below, after the definition of $X_{n,p}$. After the proof, we discuss the selection of the prime parameters $n$ and~$p$, so that $r$-ample complexes can be constructed for every~$r$. We prove the following corollary:

\begin{cor}
\label{findxnp}
For every sufficiently large $r$, there exists an $r$-ample Iterated Paley Complex $X_{n,p}$ on $n=2^{(2+o(1))r2^r}$ vertices.
\end{cor}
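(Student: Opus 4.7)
The plan is to apply Theorem~\ref{amplepaley} to a carefully chosen pair $(n,p)$. The hypotheses require an odd prime $p > 2^{2^r+2r}$ and an odd prime power $n > r^2 p^{2r}$ with $p \mid n-1$; to achieve $n = 2^{(2+o(1))r 2^r}$ I push both parameters down to the bottom of their ranges, so that $\log_2 p \sim 2^r$ and $\log_2 n \sim 2r\log_2 p \sim 2r\cdot 2^r$.

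First, Bertrand's postulate applied at $N = 2^{2^r+2r}$ produces a prime $p \in (N,2N]$; this gives $p > 2^{2^r+2r}$ and $\log_2 p = 2^r + O(r)$. Next, I would produce $n$ as an ordinary prime lying in the arithmetic progression $1 \pmod p$ and in the interval $(r^2 p^{2r},\, 2 r^2 p^{2r}]$. Such an $n$ is automatically odd (being far larger than $2$), is trivially a prime power, and by construction satisfies $p \mid n-1$ and $n > r^2 p^{2r}$, so Theorem~\ref{amplepaley} yields that $X_{n,p}$ is $r$-ample. The asymptotic estimate is then routine: $\log_2 n \leq 1 + 2\log_2 r + 2r\log_2 p = 2r\cdot 2^r + O(r^2)$, and since $r^2 = o(r \cdot 2^r)$ this is $(2+o(1))\,r\cdot 2^r$.

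The main obstacle is to show that the prime $n$ exists in the prescribed window. This is a statement about the distribution of primes in arithmetic progressions which I would handle via a quantitative estimate of the form
\[
\pi(x;\,q,1) \;\gg\; \frac{x}{q\log x} \quad\text{for all } x \geq q^{C},
\]
$C$ being an absolute constant. Such a bound follows from Linnik's theorem on the least prime in an AP together with prime-number-theorem type lower bounds in the range above the Linnik threshold (Heath-Brown and subsequent refinements give $C$ of the order of $5.5$). Applied with $q = p$ and $x = r^2 p^{2r}$, the condition $x \geq q^{C}$ reduces to $2r \geq C$, which holds for all sufficiently large $r$, and the resulting lower bound $\gg p^{2r-1}/\log p^{2r}$ tends to infinity, comfortably producing many primes $n \equiv 1 \pmod p$ in $(x, 2x]$. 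With such an $n$ in hand the remaining verifications are mechanical, and the construction of $X_{n,p}$ is completely explicit once $p$ and $n$ are fixed.
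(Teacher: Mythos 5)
Your overall strategy is the same as the paper's: Bertrand's postulate to place $p$ in $(2^{2^r+2r},2^{2^r+2r+1})$, then a prime $n\equiv 1\bmod p$ just above $r^2p^{2r}$, then the arithmetic giving $\log_2 n=2r2^r+O(r^2)=(2+o(1))r2^r$. That part, and the final estimate, are correct. The gap is in the analytic input you use to produce $n$. The bound you assert,
$$\pi(x;q,1)\;\gg\;\frac{x}{q\log x}\quad\text{for all }x\ge q^{C},$$
does not follow from Linnik's theorem and is not a known unconditional result. Linnik's theorem guarantees only \emph{one} prime below $q^{L}$; the quantitative lower bounds that come out of its proof (and out of Maynard's explicit refinements, which the paper cites) carry a deficit of order $\frac{\log q}{\sqrt q}$ relative to the expected density $\frac{x}{\phi(q)\log x}$, reflecting the possibility of a Siegel zero. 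Worse, the residue class $1\bmod p$ is precisely one of the classes that a Siegel zero would deplete, so you cannot sidestep the issue by the choice $a=1$. As a consequence, the known lower bound for $\pi(2x;p,1)$ need not exceed the Brun--Titchmarsh upper bound for $\pi(x;p,1)$, and you cannot conclude that every dyadic interval $(x,2x]$ with $x\ge p^{C}$ contains a prime $\equiv 1\bmod p$.

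The repair is exactly what the paper's Lemma~\ref{p8} does: keep the same strategy but lengthen the interval from $(M,2M]$ to $(M,\sqrt{p}\,M)$, so that Maynard's lower bound $\pi(\sqrt{p}M;p,1)>\frac{R\log p}{\sqrt p}\cdot\frac{\sqrt p M}{\phi(p)\log(\sqrt p M)}$ beats the Brun--Titchmarsh upper bound $\pi(M;p,1)<\frac{2\mathrm{Li}(M)}{\phi(p)}$. The price is that $n$ may be as large as $r^2p^{2r+\frac12}$ rather than $2r^2p^{2r}$, but the extra factor $p^{1/2}=2^{O(2^r)}$ is absorbed into the $o(1)$ in the exponent $(2+o(1))r2^r$, so your final asymptotic survives unchanged. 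With that substitution your argument coincides with the paper's proof.
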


To summarize, $\exp(\Omega(r\,2^r))$ vertices are sufficient for constructing an $r$-ample complex explicitly, compared to $\exp(\Omega(2^r))$ vertices  probabilistically, by Corollary~\ref{proba}. We do not know how many vertices are really needed for these constructions to be $r$-ample. However, the lack of $r$-ample complexes of size $\exp(O(2^r/\sqrt{r}))$, by Corollary~\ref{lower}, gives a lower bound.  

\medskip

Before proceeding to the definition of Iterated Paley Simplicial Complexes, we discuss the context in which they have emerged. The uninterested reader may skip to Definition~\ref{Qnp} without loss of continuity.

The \emph{Paley graph} and the \emph{Paley tournament} are long known to satisfy the corresponding $r$-ampleness property \cite{graham1971constructive, blass1981paley, bollobas1981graphs}. Their vertices are the elements of a finite field $\mathbb{F}_n$, with an edge from $x$ to~$y$ if and only if $(y-x)$ is a quadratic residue~\cite[in matrix form]{paley1933orthogonal}. More generally, these constructions exhibit numerous important properties typical to their random counterparts, and are accordingly called \emph{pseudorandom} or \emph{quasirandom}~\cite{alon2004probabilistic, lovasz2012large}. However, these provably $r$-ample graphs and tournaments are nearly square the size of those probabilistically shown to be $r$-ample. Understanding such gaps between randomized and explicit solutions is a recurring theme in the study of combinatorial structures and computational complexity.

The most straightforward extension of Paley's graph to higher dimensions is by including a $d$-dimensional face $x_0 x_1 \cdots x_d$ if $x_0+x_1+\dots+x_d$ is a quadratic residue. Hypergraphs with $(d+1)$-edges constructed by this rule are known to possess \emph{some} quasirandom properties \cite{haviland1989pseudo, chung1991quasi, lenz2015poset}. They also yield large cosystoles in simplicial complexes, pertinent to $d$-dimensional coboundary expansion over $\mathbb{F}_2$, by Kozlov and Meshulam, see \cite{kozlov2019quantitative}. However, they fail to be ample. Indeed, if four vertices satisfy $a+b=c+d$, then $abx$ is a face if and only if $cdx$ is a face, hence some extensions of such a foursome are not available. All the explicit constructions considered in the study of quasirandom hypergraphs break down when it comes to $r$-ampleness.

Our new construction combines three generalizations of Paley graphs. First, if $m|(n-1)$ then, rather than quadratic residues in $\mathbb{F}_n$, one may determine adjacency by means of the multiplicative subgroup of $m$th powers and its cosets. Having similar quasirandom properties \cite{kisielewicz2004pseudo, ananchuen2006cubic}, such graphs proved useful in Ramsey theory \cite{clapham1979class, guldan1983new, su2002lower}. They appear also in the classification of graphs with strong symmetries \cite{peisert2001all, li2009homogeneous}. 

Second, instead of defining hyperedges by summing $x_0+\dots+x_d$, one may use the Vandermonde determinant,
$$ \Delta (x_0,\dots,x_d) \;:=\; \prod\limits_{0 \leq i<j \leq d}(x_i-x_j) $$ 
This is an appealing route because such products are compatible with the multiplicative nature of the above subgroups. Hypergraphs produced this way \cite{kocay1992reconstructing, potovcnik2009vertex, gosselin2010vertex} are known to have several nice properties, but not $r$-ampleness.

The final and novel ingredient in our construction is the repeated use of Paley-like motifs. Faces are selected according to certain cosets of $p$-power residues mod $n$, and those cosets in turn correspond to quadratic residues mod~$p$. For this reason, we name such constructions \emph{Iterated Paley}. The need for this duplexity will be clarified in the ampleness proof.

\medskip

We now formalize the foregoing description. For the following set of definitions, fix an odd prime power $n$, an odd prime~$p$ that divides $n-1$, and a primitive element~$g$ in the finite field~$\mathbb{F}_n$.

\begin{definition}
\label{Qnp}
For $n$, $p$, $g$ as above, let
$$ Q_{n,p} \;:=\; \left\{g^{\alpha} \;\mid\; \alpha \equiv \beta^2 \bmod p, \;\text{for}\; \alpha,\beta \in \mathbb{Z} \right\} \;\subset\; \mathbb{F}_n$$
\end{definition}

\begin{remark*}
Since $p|(n-1)$, we have a multiplicative subgroup $H = \langle g^p \rangle$ of index $p$ in $\mathbb{F}_n^{\times} = \langle g \rangle$, and a group isomorphism $\mathbb{F}_n^{\times}/H \to (\mathbb{F}_p,+)$ taking $g H \mapsto 1$. The set $Q_{n,p}$ is the union of $H$-cosets that correspond to quadratic residues mod~$p$. Hence, it contains about half the elements of the field,
$$ \left|Q_{n,p}\right| \;=\; \frac{p+1}{2p}\,(n-1) $$
\end{remark*}

\begin{definition}
\label{Hnp}
The \emph{Iterated Paley Hypergraph} $H_{n,p}$ has $\mathbb{F}_n$ as its vertex set, and a subset $\{x_1, x_2, \dots, x_t\}$ forms a hyperedge if
$$ \prod_{1\le i<j\le t}(x_i-x_j) \;\in\; Q_{n,p} $$
\end{definition}

\begin{remark*}
Note that $(-1)=g^{(n-1)/2}$, and $(n-1)/2\equiv 0 \bmod p$ since $p$ is odd, hence $(-1)\in H = \langle g^p \rangle$. Therefore, the condition in the definition of~$H_{n,p}$ does not depend on the order of the vertices $x_1, x_2, \dots, x_t$. Note also that all $n$ singletons~$\{x\}$ are hyperedges, because $1 = g^0 \in Q_{n,p}$.
\end{remark*}

The Iteretad Paley Hypergraph might not be a simplicial complex, as it is not necessarily closed downward. We thus consider the largest simplicial complex contained in it, defined as follows. 

\begin{definition}
\label{Xnp}
The \emph{Iterated Paley Simplicial Complex} $X_{n,p}$ has $\mathbb{F}_n$ as its vertex set, and a set $\{x_1, x_2, \dots, x_t\}$ forms a simplex if for every subset 
$\{x_{s_1}, x_{s_2}, \dots, x_{s_k}\} \subseteq \{x_1, x_2, \dots, x_t\}$
$$ \prod_{1\le i<j\le k}\left(x_{s_i}-x_{s_j}\right) \;\in\; Q_{n,p} $$
\end{definition}

\begin{remark*}
The definitions of $Q_{n,p}$ and thereby $H_{n,p}$ and $X_{n,p}$ depend on the choice of primitive element $g \in \mathbb{F}_n$. Any other primitive element $h = g^\alpha \in \mathbb{F}_n$ gives the same construction if $\alpha$ is a quadratic residue mod~$p$, and a different one if not. The two constructions are not isomorphic in general. Our results apply to either choice.
\end{remark*}

\begin{remark*}
Note that $H_{n,p}$ and $X_{n,p}$ are invariant under a rather large group of symmetries $\{x \mapsto ax+b \,\mid\, a \in H, b \in \mathbb{F}_n\}$.
\end{remark*}

\medskip

Before proving Theorem \ref{amplepaley}, we sketch the idea of the proof via a simple example: accommodating one 3-ampleness challenge, posed by three vertices $a,b,c \in X = X_{n,p}$. Given $a,b,c$, suppose that we are looking for another vertex $x \in X$ such that, say, $ax,bx,cx,abx,bcx \in X$ and $acx,abcx \not \in X$. 

We find $x$ in two stages. First we decide on three suitable $H$-cosets $ g^{\alpha}H, g^{\beta}H,g^{\gamma}H$, where $H=\langle g^p \rangle$ as before. Then we find $x \in \mathbb{F}_n$ such that $(x-a) \in g^{\alpha}H$, $(x-b) \in g^{\beta}H$, and $(x-c) \in g^{\gamma}H$. Such an $x$ exists by extending the uncorrelation property of squares, from Paley graphs. Specifically, 3 different additive translations of $p$-power cosets must intersect in $n/p^3 \pm O(\sqrt{n})$ elements. 

Without knowing better, we pick $\alpha, \beta, \gamma \in \mathbb{F}_p$ one by one. The requirement $ax \in X$ implies that $\alpha$ must be a square, which gives $\lceil p/2 \rceil$ options. A short calculation shows that $bx, abx \in X$ require both $\beta$ and $\beta+\delta$ to be squares, where $\delta$ is determined by $(a-b)g^{\alpha} \in g^{\delta}H$. This has $p/2^2\pm O(\sqrt{p})$ solutions by the same ampleness property of Paley graphs. The requirements $cx,bcx \in X$ and $acx,abcx \not \in X$ give four constraints: $\gamma$ and $\gamma + \delta_b$ are squares while $\gamma + \delta_a$ and $\gamma + \delta_{ab}$ are nonsquares, where $\delta_a,\delta_b,\delta_{ab}$ are known from $a,b,c,\alpha,\beta$. This is satisfied by $p/2^4\pm O(\sqrt{p})$ elements of~$\mathbb{F}_p$, by the same reason that Paley graphs are 4-ample.

One has to be a bit careful to avoid contradictions between requirements. For example, $\delta_a=\delta_b$ might mean no solution for~$\gamma$. The proof will avoid such problematic cases with advance planning. On the other hand, sometimes we can take shortcuts. For example, $acx \not \in X$ makes $abcx \not \in X$ come for free. We will not rely on such considerations, as they would not simplify the argument in general. That makes our proof apply to hypergraphs too. 

\medskip

With the above example in mind, we begin with a formal proof of Theorem~\ref{amplepaley}. We first formalize the idea that at every step we have an abundance of choices for the witness to ampleness, with differences lying in the necessary cosets. The following lemma generalizes a property of Paley graphs and tournaments~\cite{graham1971constructive, bollobas1981graphs, blass1981paley} to characters of order~$m$.  

\begin{lemma}
\label{findx} 
In a finite field $\mathbb{F}_q$, let $A \,{\subset}\, \mathbb{F}_q^{\times}$ be a proper multiplicative subgroup of index $m$. Given $d$ cosets of~$A$, 
$$ A_1, A_2, \dots, A_d \;\in\; \mathbb{F}_q^{\times}/A $$ 
and pairwise distinct elements
$$ c_1,c_2,\dots,c_d \;\in\; \mathbb{F}_q $$
the number of elements $x \in \mathbb{F}_q$ satisfying 
$$ (x-c_1) \in A_1, \;(x-c_2) \in A_2, \;\dots, \; (x-c_d) \in A_d $$ 
is at least
$$ \frac{q}{m^d} \;-\; (d-1)\sqrt{q} \;-\; \frac{d}{m}$$
\end{lemma}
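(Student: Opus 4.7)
The plan is to count $N$ by Fourier analysis on the multiplicative group $\mathbb{F}_q^{\times}$ and then invoke Weil's bound on multiplicative character sums. Since $A$ has index $m$ in $\mathbb{F}_q^{\times}$, the characters of $\mathbb{F}_q^{\times}$ trivial on $A$ form a cyclic group of order $m$ generated by some $\chi$ of exact order $m$. Fixing representatives $a_i \in A_i$, the indicator of $A_i$ admits the Fourier expansion
\begin{equation*}
\mathbf{1}_{A_i}(y) \;=\; \frac{1}{m}\sum_{j=0}^{m-1} \chi^j(y)\,\overline{\chi^j(a_i)} \;-\; \frac{\mathbf{1}_{y=0}}{m},
\end{equation*}
under the conventions $\chi^0 \equiv 1$ on $\mathbb{F}_q$ and $\chi^j(0)=0$ for $j \ge 1$. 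The correction $-\mathbf{1}_{y=0}/m$ compensates for the fact that at $y=0$ the first sum evaluates to $1/m$ rather than $0$.

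Substituting into $N=\sum_{x\in\mathbb{F}_q}\prod_{i=1}^d \mathbf{1}_{A_i}(x-c_i)$ and expanding the product, each factor contributes either its character part or its correction $-\mathbf{1}_{x=c_i}/m$. Since the $c_i$ are pairwise distinct, any choice with two or more correction factors vanishes identically. Hence $N$ is the sum of the fully Fourier-expanded term
\begin{equation*}
\frac{1}{m^d}\sum_{(j_1,\dots,j_d)\in\{0,\dots,m-1\}^d}\Big(\prod_i \overline{\chi^{j_i}(a_i)}\Big)\sum_{x\in\mathbb{F}_q}\prod_{i=1}^d \chi^{j_i}(x-c_i)
\end{equation*}
plus at most $d$ correction contributions, one for each choice of index carrying the $-\mathbf{1}_{x=c_i}/m$ factor. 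Each correction is bounded by $1/m$ in absolute value, so they contribute a combined error of at most $d/m$.

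The all-zero tuple produces the main term $q/m^d$. For any other tuple let $T=\{i : j_i>0\}$. The polynomial $f(x)=\prod_{i\in T}(x-c_i)^{j_i}$ has exactly $|T|$ distinct roots and is not an $m$-th power of a polynomial since each $j_i$ satisfies $0<j_i<m$. Weil's theorem on multiplicative character sums, in its distinct-roots form, yields $|\sum_x \chi(f(x))| \le (|T|-1)\sqrt{q}$. Summing over nonzero tuples, grouped by $T$, and using the identity
\begin{equation*}
\sum_{k=1}^{d}\binom{d}{k}(m-1)^k(k-1) \;=\; (d-1)m^d - dm^{d-1} + 1 \;\le\; (d-1)m^d,
\end{equation*}
the total Weil error, after dividing by $m^d$, is bounded by $(d-1)\sqrt{q}$. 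Combining the three pieces gives $N \ge q/m^d - (d-1)\sqrt{q} - d/m$.

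The only subtle point I anticipate is invoking Weil in its distinct-roots form: the total degree of $f(x)$ could be as large as $d(m-1)$, and the cruder degree version of Weil would give the much weaker bound $dm\sqrt{q}$ instead of $(d-1)\sqrt{q}$. Everything else is bookkeeping once the Fourier expansion with the $\mathbf{1}_{y=0}/m$ correction is in place.
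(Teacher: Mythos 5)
Your proof is correct and follows essentially the same route as the paper's: both expand the product of coset indicators via orthogonality of the order-$m$ character $\chi$, isolate the main term $q/m^d$, bound the $x\in\{c_1,\dots,c_d\}$ contribution by $d/m$, and apply Weil's theorem in the distinct-roots form to the remaining $m^d-1$ character sums. The only cosmetic differences are that the paper bounds each nonzero-tuple term uniformly by $(d-1)\sqrt{q}$ rather than grouping by support and using your binomial identity, and handles the degenerate points $x=c_i$ a posteriori rather than via an explicit $\mathbf{1}_{y=0}/m$ correction; both yield the same final estimate.
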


This lemma basically says that different additive translates of cosets of $m$-power residues are ``mutually uncorrelated''. Their intersection is of order $q/m^d$ as expected from random subsets, up to an error term of about~$d\sqrt{q}$. A~proof of Lemma~\ref{findx} is given in Appendix~\ref{appendix}.

\begin{proof}[Proof of Theorem \ref{amplepaley}] 
Let $X = X_{n,p}$ be as in Definition~\ref{Xnp}. Consider a set of vertices $\sigma = \{x_1,\dots,x_d\} \subseteq \mathbb{F}_n = V(X)$. Throughout this proof, $\sigma$ is assumed to be nonempty. The Vandermonde determinant of~$\sigma$ in $\mathbb{F}_n$ falls into one of the $p$ cosets of $H = \langle g^p \rangle$, 
$$ \Delta (\sigma) \;=\; \pm \!\prod_{1 \le i < j \le d}\! (x_i - x_j) \;\in\; g^{\alpha(\sigma)}H $$
The exponent $\alpha(\sigma) \in \{0,1,\dots,p-1\}$ is uniquely determined for every~$\sigma$, because $\pm 1 \in H$. In view of the remark following Definition~\ref{Qnp}, we regard $\alpha(\sigma)$ as an element of $\mathbb{F}_p$. 

Recall that a simplex $\sigma \in X$ if and only if for all~$\tau \subseteq \sigma$ the Vandermonde determinant $\Delta (\tau) \in Q_{n,p}$. That is equivalent to $\alpha(\tau) \in Q_p \cup \{0\}$, where
$$ Q_p \;:=\; \left\{\beta^2 \;\mid\; \beta \in \mathbb{F}_p^{\times}\right\} $$
$Q_p$ is the multiplicative subgroup of quadratic residues mod~$p$. The coset of quadratic nonresidues is denoted by $Q_p^c := \mathbb{F}_p^\times \setminus Q_p$.

To verify that $X$ is $r$-ample, consider a set $U \subset \mathbb{F}_n$ of $r$ vertices, and a subcomplex $Y \subseteq X_U$. We seek a vertex $x \in \mathbb{F}_n \setminus U$ such that for every $\sigma \in X_U$ the simplex $\sigma x \in X$ if and only if~$\sigma \in Y$. Here $\sigma x$ stands for the simplex~$\sigma \cup \{x\}$.

By the above characterization of the simplices of~$X$, it is sufficient for the desired vertex~$x$ to solve the following set of $2^r-1$ constraints,
\begin{equation}
\label{problem}
\tag{$\star$}
\forall\, \sigma \subseteq U, \;\;\;\;\;\;\;\; \alpha(\sigma x) \;\in\; \begin{cases} Q_p & \text{if}\; \sigma \in Y \\ Q_p^c & \text{if}\; \sigma \not\in Y \end{cases}    
\end{equation} 
For every hypergraph $Y$ on every set of $r$ vertices $U \subset \mathbb{F}_n$, we show that this problem is indeed satisfiable.

We rewrite the above constraints on $x$. Clearly, the Vandermonde determinant of $\sigma x$ decomposes as follows
$$ \Delta(\sigma x) \;=\; \pm \!\prod_{1 \le i < j \le d}\! (x_i - x_j) \, \prod_{v \in \sigma} (x - v) \;=\; \pm \Delta(\sigma) \prod_{v \in \sigma} \Delta(vx) $$
Applying the quotient map $\mathbb{F}_n^{\times} \to \mathbb{F}_n^{\times} / H \xrightarrow{\sim} \mathbb{F}_p^+$ such that $gH \mapsto 1$ yields the following congruence in~$\mathbb{F}_p$,
$$ \alpha(\sigma x) \;\equiv\; \alpha(\sigma) + \sum_{v \in \sigma} \alpha(vx) $$

We introduce $r$ new variables, $\xi_v \in \mathbb{F}_p$ for each $v \in U$, related to $x$ via $\xi_v = \alpha(vx)$, and obtain an equivalent reformulation of~(\ref{problem}), with the $r+1$ variables $\xi_v \in \mathbb{F}_p$ and $x \in \mathbb{F}_n$.
\begin{align*}
\label{i}\tag{I}
&\forall\, \sigma \subseteq U &&\alpha(\sigma) + \sum_{v \in \sigma} \xi_v \;\in\; \begin{cases} Q_p & \text{if}\; \sigma \in Y \\ Q_p^c & \text{if}\; \sigma \not\in Y \end{cases} \\[0.25em]
\label{ii}\tag{II}
&\forall\, v \in U && (x - v) \;\in\; g^{\xi_v} H
\end{align*}

We now show that given any assignment to the $r$ variables $\{\xi_v\}$ there exists $x \in \mathbb{F}_n$ that satisfies~(\ref{ii}). Indeed, applying Lemma~\ref{findx} with $q=n$, $A=H$, $m=p$, and $d=r$, the number of $x \in \mathbb{F}_n$ satisfying $(x-v) \in g^{\xi_v}H$ for every $v \in U$ is at least
$$\frac{n}{p^r} \;-\; (r-1)\sqrt{n} \;-\; \frac{r}{p} $$ 
Since $n > r^2p^{2r}$, this lower bound is positive, and there exists at least one such solution $x \in \mathbb{F}_n  \setminus U$. This reduces the problem to finding $\{\xi_v\}$ that satisfy~(\ref{i}).

Let $U = \{u_1,\dots,u_r\}$ in arbitrary order. Given $\sigma \subseteq U$, its last vertex $u_i$ in this sequence is called the \emph{top vertex} of $\sigma$. To be precise, $u_i \in \sigma$ and $u_j \not \in \sigma$ for $j > i$. Since the constraints in~(\ref{i}) are labeled by $\sigma \subseteq U$ and include the variables are $\xi_v$ for $v \in \sigma$, we determine $\xi_{u_1}, \dots, \xi_{u_r}$ inductively, selecting each $\xi_{u_i}$ according to the constraints where $u_i$ is the top vertex. We abbreviate $\xi_i = \xi_{u_i}$ as no confusion can arise. 

Supposing $\xi_{1},\dots,\xi_{{i-1}}$ are determined, the next variable $\xi_{i}$ has to satisfy the $2^{i-1}$ constraints where $u_i$ is the top vertex.
\begin{equation}
\label{xiui}
\tag{$\star\star$}
\forall\;\, \{u_i\} \subseteq \sigma \subseteq \{u_1,\dots,u_i\}, \;\;\;\;\;\;\;\;  \xi_{i} + \delta(\sigma) \;\in\; \begin{cases} Q_p & \text{if}\; \sigma \in Y \\ Q_p^c & \text{if}\; \sigma \not\in Y \end{cases}
\end{equation}
where
$$ \delta(\sigma) \;:=\; \alpha(\sigma) + \sum_{v \in \sigma \setminus u_i} \xi_v \;\in\; \mathbb{F}_p $$
is already known from the variables determined earlier than~$\xi_i$.

Before invoking Lemma~\ref{findx} to show that $\xi_{i}$ exists, one has to make sure that all $\delta(\sigma)$ in~(\ref{xiui}) are distinct. This requires some care when selecting $\xi_{1},\dots,\xi_{{i-1}}$. Suppose that $u_i \in \sigma \cap \sigma'$ is the common top vertex of $\sigma$ and~$\sigma'$, and $u_j \in \sigma \setminus \sigma'$ is the top vertex of their difference $\sigma \triangle \sigma' = (\sigma \setminus \sigma') \cup (\sigma' \setminus \sigma)$. Then the condition $\delta(\sigma) \neq \delta(\sigma')$ takes the form
$$ \xi_{j} \;\neq\; \left(\alpha(\sigma') + \sum_{v \in (\sigma' \setminus \sigma)} \xi_{v}\right) - \left( \alpha(\sigma) + \!\!\sum_{v \in (\sigma \setminus \sigma') \setminus u_j} \!\!\xi_{v}\right) $$
Since the value on the right hand side is known when selecting $\xi_{j}$ it can be avoided as long as there are enough other options supplied by Lemma~\ref{findx}. The number of forbidden values for $\xi_{j}$ is at most the number of such pairs of simplices, $\{\sigma, \sigma'\}$, with common top vertex $u_i$ and top ``differentiating'' vertex~$u_j$. The number of these pairs is
$$ \left(2^{j-1}\right)^2 (2^{r-j} - 1) \;<\; 2^{r+j-2} $$
To sum up, in order to enable solutions for all $\xi_{i}$ we will actually find $2^{r+j-2}$ potential solutions for every variable $\xi_{j}$, and proceed with one that evades collisions among~$\delta(\sigma)$ as shown above.

We thus assume by induction that $\xi_1,\dots,\xi_{i-1}$ are given and the $2^{i-1}$ constraints in~(\ref{xiui}) have distinct translations $\delta(\sigma)$, and apply Lemma~\ref{findx} with $q=p$, $A=Q_p$, $m=2$, and $d=2^{i-1}$. This guarantees at least 
$$ \frac{p}{2^{2^{i-1}}} \;-\; \left({2^{i-1}}-1\right) \sqrt{p} \;-\; \frac{{2^{i-1}}}{2} $$
possible values for the variable~$\xi_{i}$.
Since $p > 2^{2^r + 2r}$ and $i \le r$, this number is greater than
$$ 2^{2^{r-1}+2r}-(2^{r-1}-1)2^{2^{r-1}+r} - 2^{r-2} \;\geq\; 2^{2^{r-1}+2r-1} \;\geq\; 2^{r+i-2} $$
for all $i \leq r$, as required. 

In conclusion, there exist $\xi_{1},\dots,\xi_{r} \in \mathbb{F}_p$ satisfying~(\ref{xiui}) for every~$i$, and hence also~(\ref{i}). As shown above, this yields a vertex $x \in \mathbb{F}_n$ that satisfies~(\ref{ii}) and hence $lk_X(x) \cap X_U = Y$ as required.
\end{proof}

In the rest of this section, we discuss the selection of parameters $n$ and $p$ for $r$-ample Iterated Paley Simplicial Complexes.

The construction requires two primes satisfying $n \equiv 1 \bmod p$, that are large enough as in Theorem~\ref{amplepaley}. Given a prime $p$, the existence of arbitrarily large primes $n \in p\mathbb{N}+1$ is a special case of the classical Dirichlet Theorem. This case actually follows from an elementary argument. For $N > p$, let $n$ be a prime divisor of $M = 1 + N! + \dots + (N!)^{p-1} = ((N!)^p-1)/(N!-1)$. Since $n|((N!)^p-1)$, necessarily~$n>N$. If $N! \equiv_n 1$ then $M \equiv_n p$, which is ruled out by $n|M$. Therefore, $N! \not\equiv_n 1$ while $(N!)^p \equiv_n 1$. By Lagrange's theorem, $p|(n-1)$, as desired.

However, in order to establish our quantitative result, Proposition~\ref{findxnp}, we need a prime $n$ roughly of order $p^{2r}$. Dirichlet's theorem asserts that about $1/(p-1)$ of all primes are contained in the arithmetic progression $p\mathbb{N}+1$, in an appropriate sense of asymptotic density~\cite{iwaniec2004analytic}. The following lemma uses quantitative estimates of the ``error term'' to bound the gaps between these primes, which provides such a prime $n$ that is not too large.

\begin{lemma}
\label{p8}
There exists a constant $P$, such that for every prime $p>P$ and every $M \geq p^8$ there exists a prime $n \equiv 1\bmod p$ in the interval
$$ M \;<\; n \;<\; \sqrt{p} \,M $$
\end{lemma}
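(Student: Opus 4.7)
The plan is to invoke a quantitative form of the prime number theorem for arithmetic progressions in order to count primes $n \equiv 1 \pmod{p}$ in the interval $(M, \sqrt{p}\,M)$. Heuristically, if primes equidistribute among the $\phi(p) = p-1$ reduced residue classes modulo $p$, then the expected count in this interval is
$$\frac{\pi(\sqrt{p}\,M) - \pi(M)}{p-1} \;\approx\; \frac{(\sqrt{p}-1)\,M}{(p-1)\,\ln M} \;\asymp\; \frac{M}{\sqrt{p}\,\ln M},$$
which under the hypothesis $M \ge p^8$ grows at least like $p^{15/2}/\ln p$, tending to infinity with $p$. So the difficulty is not a lack of main term but purely the control of the error.

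To convert this heuristic into a proof, I would invoke a short-interval prime number theorem for arithmetic progressions of the following standard form: there exist constants $0 < \theta, \vartheta < 1$ such that, as $x \to \infty$,
$$\pi(x+y;\,q,\,a) - \pi(x;\,q,\,a) \;\sim\; \frac{y}{\phi(q)\,\ln x}$$
uniformly in $q \le x^{\vartheta}$, $y \ge x^{\theta}$, and $(a,q) = 1$. Such statements follow from classical zero-density estimates for Dirichlet $L$-functions (Huxley, Heath--Brown) and are presented in the reference Iwaniec--Kowalski~\cite{iwaniec2004analytic} that is already cited in the surrounding discussion. I would apply this with $q = p$, $a = 1$, $x = M$, and $y = (\sqrt{p}-1)M$. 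The hypothesis $M \ge p^8$ yields $q = p \le x^{1/8}$, which places $q$ safely inside the uniform range; and $y \ge M/2$ (as soon as $\sqrt{p} \ge 2$) is larger than any $x^\theta$ with $\theta < 1$. Hence the asymptotic applies and produces
$$\pi(\sqrt{p}\,M;\,p,\,1) - \pi(M;\,p,\,1) \;=\; (1+o(1))\,\frac{M}{\sqrt{p}\,\ln M},$$
which is strictly positive for large $p$, yielding the desired prime.

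The main obstacle is making the $o(1)$ in the asymptotic quantitatively explicit enough to identify a threshold $P$. Once we fix an effective form of the short-interval theorem, the constant $P$ is determined by three requirements: (i) $\sqrt{p} > 2$, so the interval is non-trivial; (ii) $p \le M^{1/8}$ places $q$ in the uniformity window; (iii) the error term in the cited theorem, evaluated at $x = M \ge p^8$, is smaller than half the main term. Combining these, any prime $p$ exceeding the resulting absolute constant $P$ yields the asserted prime $n$ in $(M, \sqrt{p}\,M)$. In fact the argument produces many such primes, not merely one, which will be convenient downstream when one also requires $n$ to be a prime \emph{power} with additional arithmetic properties.
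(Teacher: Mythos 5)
There is a genuine gap: the analytic input you want to invoke does not exist unconditionally. A short-interval prime number theorem for arithmetic progressions of the form $\pi(x+y;q,a)-\pi(x;q,a)\sim y/(\phi(q)\log x)$ holding \emph{uniformly for every individual modulus} $q\le x^{\vartheta}$ with a fixed power $\vartheta>0$ is not a known theorem -- it is an open problem, not a consequence of the Huxley/Heath--Brown zero-density estimates. Those estimates yield asymptotics in short intervals without a progression, or for \emph{almost all} moduli (Bombieri--Vinogradov style); for a single modulus $p$ the obstruction is a possible Siegel zero $\beta$ of a real character mod $p$, which contributes a term of size roughly $-\chi(a)x^{\beta}/(\beta\phi(p))$ to $\psi(x;p,a)$. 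Since you need the residue class $a=1$ and $\chi(1)=1$, this term is \emph{negative} and can nearly cancel the main term, so the asymptotic you write down, with error smaller than half the main term for all $p\le M^{1/8}$, cannot be established with current technology. Unconditionally one only knows the asymptotic for $q\le(\log x)^A$ (Siegel--Walfisz), which is far too small here.

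The paper gets around this by never asserting an asymptotic. It plays an \emph{upper} bound at $M$ against a \emph{lower} bound at $\sqrt{p}\,M$: by Maynard's explicit refinements of Brun--Titchmarsh and of Linnik-type lower bounds, for $x\ge q^{8}$ one has $\pi(x;q,a)<2\,\mathrm{Li}(x)/\phi(q)$ (loss of a factor $2$) and $\pi(x;q,a)>\frac{R\log q}{\sqrt{q}}\cdot\frac{x}{\phi(q)\log x}$ (loss of a factor about $\sqrt{q}/\log q$, which is exactly the worst-case damage a Siegel zero can do). The interval length $\sqrt{p}\,M$ rather than, say, $2M$ is not a convenience but a necessity: stretching by $\sqrt{p}$ is precisely what makes the weak lower bound at the right endpoint exceed the factor-of-$2$ upper bound at the left endpoint, forcing $\pi(M;p,1)<\pi(\sqrt{p}M;p,1)$ and hence a prime in between. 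If you want to salvage your write-up, replace the asymptotic with this pair of one-sided bounds; the rest of your plan (choice of $q=p$, $a=1$, the role of $M\ge p^{8}$) then goes through as in the paper.
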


\begin{proof}
For $a$ and~$q$ coprime, the number of primes less than or equal to~$x$ that are congruent to~$a \bmod q$ is denoted
$$\pi(x; q, a) \;=\; \left|\{n\leq x \;:\; n\text{ is prime, } n \equiv a\bmod q\}\right|$$
Bounds on this number under various assumptions on the relation between $q$ and $x$ are given by the Brun--Titchmarsh theorem and the Siegel--Walfisz theorem. By recent improved bounds due to Maynard \cite[Thms~1~\&~2]{JamesMaynard2013}, there exist effectively computable positive constants $Q$ and~$R$, such that if $q\geq Q$ and $x\geq q^8$ then
\begin{align*}
\frac{R\,\log q}{\sqrt{q}}\cdot \frac{x}{\phi(q)\log x} \;<\; \pi(x; q, a) \;<\; \frac{2\mathrm{Li}(x)}{\phi(q)}
\end{align*}
Here $\phi(q) = |\{a < q \,:\, (a,q)=1\}|$ is Euler's totient function, and the function $\mathrm{Li}(x) = \int_2^x \tfrac{dt}{\log t} \sim \tfrac{x}{\log x} $ is the Eulerian logarithmic integral. In fact, $\text{Li}(x) < \tfrac{3x}{2\log x}$ will suffice for our needs. 

Letting $q=p$ and $a=1$, it follows for any $p > P = \max(Q, \exp(4/R))$ and $M > p^8$ that
$$ \pi(M;p,1) \;<\; \frac{3M}{(p-1) \log M} \;<\; \frac{R\,\log p\,\sqrt{p}\,M}{\sqrt{p}\,(p-1) \log (\sqrt{p}\,M)} \;<\; \pi(\sqrt{p}M;p,1) $$
The middle inequality is verified by
$$ \frac{3 \log (\sqrt{p}\,M)}{R\,\log p \, \log M} \;<\; \frac{3}{R\,\log p}\left(\frac18 + 1\right) \;<\; 1 $$
Since $\pi(M;p,1) < \pi(\sqrt{p}M;p,1)$, there exists at least one prime $n \equiv 1 \bmod p$ between $M$ and $\sqrt{p}M$, as required.
\end{proof}

\begin{proof}[Proof of Proposition \ref{findxnp}]
We now show there exist parameters satisfying the assumptions $p > 2^{2^r+2r}$ and $n > r^2 p^{2r}$ of Theorem \ref{amplepaley}, and $n = 2^{(2+o(1))r2^r}$.

In selecting~$p$, we can just rely on \emph{Bertrand's postulate}, i.e., for every $N \in \mathbb{N}$ there exists a prime between $N$ and~$2N$. Thus, there exists a prime~$p$ in the range
$$ 2^{2^r+2r} \;<\; p \;<\; 2^{2^r+2r+1} $$
Suppose that $r$ is large enough so that $p$ satisfies Lemma~\ref{p8}. We pick a prime $n \equiv 1 \bmod p$ in the range
$$ r^2p^{2r} \;<\; n \;<\; r^2p^{2r+\tfrac12} $$
Therefore, for $r$ sufficiently large, there exists an $r$-ample Iterated Paley Simplicial Complex $X_{n,p}$, on at most 
$$ n \;<\; r^2 2^{(2^r+2r+1)(2r+\frac{1}{2})} \;=\; 2^{2r2^r(1+o(1))} $$
vertices.
\end{proof}

\begin{remark}
\label{explicit}
Finally, we note that the construction of $X_{n,p}$ is \emph{explicit} at least in the following sense. 

Given $r \in \mathbb{N}$, one can find suitable primes $p$ and $n = \exp(O(r2^r))$ and a primitive $g \in \mathbb{F}_n$ in $\text{poly}(n)$ time. One can also decide whether a given face belongs to the $r$-dimensional skeleton of $X_{n,p}$ in $\text{poly}(n)$ time. 

These rough estimates leave some room for improvement, as the description of $X_{n,p}$ and such a face are in fact poly-logarithmic in~$n$. Assuming the generalized Riemann hypothesis, or heuristics on the distribution of primes, may help in selecting $p$, $n$, and~$g$. We do not pursue this matter here.
\end{remark}

\appendix
\section{Proof of Lemma \ref{findx}}
\label{appendix}

Following works on Paley graphs and tournaments \cite{graham1971constructive, bollobas1981graphs,blass1981paley}, we use character sums to prove Lemma~\ref{findx}. A~multiplicative \emph{character} of a finite field $\mathbb{F}_q$ is a map $\chi:\mathbb{F}_q \to \mathbb{C}$, such that $\chi(0)=0$, $\chi(1)=1$, and $\chi(ab)=\chi(a)\chi(b)$ for every $a,b \in \mathbb{F}_q$. Since $\chi$ is a homomrphism between the multiplicative groups, its image is all $m$th roots of unity, where $m = (q-1)/|\ker \chi|$ is called the \emph{order} of~$\chi$. 

The following estimate of character sums is based on the work of Andr\'e Weil~\cite{burgess1962character, schmidt1976equations}. This formulation appears in \cite[Thm~5.41]{lidl1997finite} or \cite[Thm~11.23]{iwaniec2004analytic}.

\begin{theorem}[Weil]
Let $\chi$ be a character of order $m > 1$ of a finite field~$\mathbb{F}_q$, and let $f(x)$ be a polynomial over $\mathbb{F}_q$, that cannot be written as an $m$th power, $c\cdot g(x)^m$. If $f(x)$ has at most $d$ distinct roots in a splitting field, then
$$ \left|\sum_{x \in \mathbb{F}_q} \chi\left(f(x)\right) \right| \;\leq\; (d-1)\sqrt{q} $$
\end{theorem}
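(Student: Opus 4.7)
The stated result is Weil's classical bound on character sums, and any proof essentially reduces it to Weil's Riemann hypothesis for smooth projective curves over finite fields. The plan is to translate the sum $\sum_x \chi(f(x))$ into a point-counting problem on a suitable auxiliary curve, invoke Weil's theorem on $\#C(\mathbb{F}_q)$, and bound the genus of $C$ in terms of $d$.

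First, I would introduce the affine curve $C^{\mathrm{aff}} : y^m = f(x)$ over $\mathbb{F}_q$ and its smooth projective model $C$. The assumption that $f$ is not of the form $c\cdot g(x)^m$ is exactly what is needed for $y^m - f(x)$ to be irreducible over $\overline{\mathbb{F}_q}(x)$, so that $C$ is geometrically irreducible and the extension $\mathbb{F}_q(C)/\mathbb{F}_q(x)$ has degree $m$. Next, I would use orthogonality of characters: for each $x \in \mathbb{F}_q$ with $f(x) \ne 0$, the number of $y \in \mathbb{F}_q$ satisfying $y^m = f(x)$ equals $\sum_{k=0}^{m-1} \chi^k(f(x))$. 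Summing over $x$, and accounting for the at most $d$ zeros of $f$ and for the $O(1)$ points above $x=\infty$, yields the identity
$$ \#C(\mathbb{F}_q) \;=\; q \;+\; \sum_{k=1}^{m-1} \sum_{x \in \mathbb{F}_q} \chi^k(f(x)) \;+\; O(d). $$

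Then I would compute the genus of $C$ via Riemann--Hurwitz applied to the degree-$m$ map $C \to \mathbb{P}^1$ sending $(x,y)\mapsto x$. Ramification occurs only above the (at most $d$) distinct roots of $f$ and possibly at $\infty$, contributing at most $m-1$ to the different at each such point; this gives $2g - 2 \le (m-1)(d+1) - 2m$, hence $g \le \tfrac{(m-1)(d-1)}{2}$ after a careful check. Finally, I would invoke Weil's Riemann hypothesis for curves, $\bigl|\#C(\mathbb{F}_q) - (q+1)\bigr| \le 2g\sqrt{q}$, which combined with the identity above yields
$$ \left|\sum_{k=1}^{m-1} \sum_{x} \chi^k(f(x))\right| \;\le\; (m-1)(d-1)\sqrt{q} \;+\; O(d). $$
An individual character $\chi$ is isolated from the sum over $k$ by running the same argument for the subfield/subgroup corresponding to the order of $\chi$ (or, equivalently, by replacing $m$ above with the exact order of $\chi$, in which case the sum on the left has only the single term in question, giving the clean bound $(d-1)\sqrt{q}$).

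The principal obstacle is the invocation of Weil's theorem on $\#C(\mathbb{F}_q)$: this is the deep Riemann hypothesis for curves over finite fields, proved by Weil via the theory of correspondences and the Frobenius action on the Jacobian, and no elementary route to square-root cancellation is known. In practice this is the reason the paper simply cites the statement from Lidl--Niederreiter or Iwaniec--Kowalski rather than reproducing the derivation. A secondary, purely bookkeeping obstacle is to track genus and boundary contributions sharply enough so that the final constant is exactly $(d-1)$ rather than $(d-1)+O(1)$; this is handled by treating the point at infinity on $C$ (and the zeros of $f$ of multiplicity divisible by $m$) with the correct multiplicity in Riemann--Hurwitz.
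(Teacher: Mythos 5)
The paper does not actually prove this statement: it is quoted verbatim as a classical theorem of Weil, with references to Lidl--Niederreiter (Thm~5.41) and Iwaniec--Kowalski (Thm~11.23), and the authors explicitly rely on the literature for it. So there is no ``paper proof'' to match; what you have written is a sketch of the standard reduction to the Riemann hypothesis for curves over finite fields, and as a high-level account it is the right picture (superelliptic curve $y^m=f(x)$, point count via orthogonality, genus via Riemann--Hurwitz, RH for curves). Your genus bound $g\le (m-1)(d-1)/2$ is correct in the generic case, and you correctly identify that the deep input is Weil's theorem on $\#C(\mathbb{F}_q)$.

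However, two steps of your sketch have genuine gaps. First, the hypothesis $f\neq c\,g(x)^m$ does \emph{not} imply that $y^m-f(x)$ is irreducible over $\overline{\mathbb{F}_q}(x)$: by Capelli's criterion one needs $f$ not to be a $p$-th power for every prime $p\mid m$ (e.g.\ $m=4$, $f(x)=x^2$ gives a reducible curve). The standard texts circumvent this by reducing the multiplicities of the roots of $f$ modulo $m$ before forming the curve, using that $\chi$ has exact order $m$. Second, and more seriously, your point-count identity only bounds the \emph{aggregate} $\sum_{k=1}^{m-1}\sum_x\chi^k(f(x))$ by $2g\sqrt q=(m-1)(d-1)\sqrt q$, and this does not yield the bound $(d-1)\sqrt q$ for the single term $k=1$ because of possible cancellation among the $m-1$ terms. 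Your proposed fix does not work: if $\chi$ already has exact order $m>2$, ``replacing $m$ by the exact order'' changes nothing and the sum still has $m-1>1$ terms. The correct route is to factor the zeta function of $C$ into the $L$-functions $L(s,\chi^k\circ f)$ attached to the individual characters of the $\mathbb{Z}/m$-cover, each of which is a polynomial of degree at most $d-1$ with inverse roots of absolute value $\sqrt q$; that is precisely the form in which Weil's theorem is stated and used in the cited sources, and it is what delivers the bound for a single $\chi$ rather than for the sum over all its powers.
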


\begin{proof}\emph{(Lemma~\ref{findx})}
Let $\alpha$ be a primitive element in $\mathbb{F}_q$, and let $\omega = e^{2 \pi i / m}$. In terms of the subgroup $A$ of index $m$, we define the multiplicative order-$m$ character $\chi(x) = \omega^{t}$ for every $x \in \alpha^{t}A$ and $t \in \mathbb{Z}_m = \mathbb{Z}/m\mathbb{Z}$, and as usual set $\chi(0)=0$. 

Let $A_1,\dots,A_d$ be $A$-cosets, and $c_1,\dots,c_d$ be distinct field elements, as in the lemma. We define $t_1, \dots, t_d \in \mathbb{Z}_m $ such that $A_i = \alpha^{t_i}A$, and consider the function
$$ S(x) \;=\; \prod\limits_{i=1}^{d} \left( \sum\limits_{j = 0}^{m-1} \left(\omega^{-t_i}\chi(x-c_i)\right)^{j} \right) $$
If $x$ satisfies $(x-c_i) \in \alpha^{t_i}A$ then $\chi(x-c_i) = \omega^{t_i}$ and the $i$th factor equals~$m$. Otherwise it
is the sum of all $m$th roots of unity and therefore vanishes, except in the case $x=c_i$ where it contributes~$1$. It follows that $S(x)=m^d$ for every $x$ that is counted in the lemma. Any other $x$ attains $S(x)=0$, apart from $x \in \{c_1,\dots,c_d\}$ where $|S(x)| \leq m^{d-1}$.

In conclusion, if $N$ is the number of $x \in \mathbb{F}_q$ that satisfy $(x-c_i) \in \alpha^{t_i}A = A_i$ for all $i \in \{1,\dots,d\}$, then
$$ \left|\sum\limits_{x \in \mathbb{F}_q} S(x) \right| \;\leq\; N m^d + d m^{d-1} $$

On the other hand, we expand the same sum over $S(x)$ into $m^d$ different sums of characters.
\begin{align*}
\sum\limits_{x \in \mathbb{F}_q} S(x) \;&=\; \sum\limits_{x \in \mathbb{F}_q} \, \prod\limits_{i=1}^{d} \left( \sum\limits_{j = 0}^{m-1} \omega^{-j t_i} \chi(x-c_i)^j \right) \\
\;&=\; \sum\limits_{x \in \mathbb{F}_q} \sum_{j_1 \dots j_d} \left(\prod\limits_{i=1}^d\omega^{-j_i t_i}\right) \; \prod\limits_{i=1}^d \chi(x-c_i)^{j_i} \\
\;&=\; \sum_{j_1 \dots j_d} \omega^{-\sum_i j_i \xi_i} \sum\limits_{x \in \mathbb{F}_q} \; \chi\left(\prod\limits_{i=1}^d (x-c_i)^{j_i} \right)
\end{align*}
The first term, which corresponds to $(j_1,\dots,j_d) = (0,\dots,0)$, is equal to~$q$. Recall that $c_1,\dots,c_d$ are distinct and $\max_ij_i < m$. By Weil's Theorem, it follows that each one of the other $m^d-1$ terms is bounded in absolute value by~$(d-1)\sqrt{q}$. Therefore, by the triangle inequality, 
$$ \left|\sum\limits_{x \in \mathbb{F}_q} S(x) \right| \;\geq\; q - (m^d-1)(d-1)\sqrt{q} $$
The lemma now follows by combining the two estimates of the sum, and solving for~$N$.
\end{proof}

{
\bibliographystyle{alpha}
\bibliography{ample3}

\newcommand{\etalchar}[1]{$^{#1}$}
\begin{thebibliography}{BCI{\etalchar{+}}20}

\bibitem[AC06]{ananchuen2006cubic}
Watcharaphong Ananchuen and Lou Caccetta.
\newblock Cubic and quadruple paley graphs with the ne. c. property.
\newblock {\em Discrete mathematics}, 306(22):2954--2961, 2006.

\bibitem[AS04]{alon2004probabilistic}
Noga Alon and Joel~H Spencer.
\newblock {\em The probabilistic method}.
\newblock John Wiley \& Sons, 2004.

\bibitem[BCI{\etalchar{+}}20]{battison}
Federico Battiston, Giulia Cencetti, Iacopo Iacopini, Vito Latora, Maxime
  Lucas, Alice Patania, Jean-Gabriel Young, and Giovanni Petri.
\newblock Networks beyond pairwise interactions; structure and dynamics.
\newblock {\em Physics Reports}, 2020.

\bibitem[BEH81]{blass1981paley}
Andreas Blass, Geoffrey Exoo, and Frank Harary.
\newblock Paley graphs satisfy all first-order adjacency axioms.
\newblock {\em Journal of Graph Theory}, 5(4):435--439, 1981.

\bibitem[BH79]{blass1979properties}
Andreas Blass and Frank Harary.
\newblock Properties of almost all graphs and complexes.
\newblock {\em Journal of Graph Theory}, 3(3):225--240, 1979.

\bibitem[Bol85]{bollobas1985random}
B{\'e}la Bollob{\'a}s.
\newblock {\em Random graphs}.
\newblock Number~73. Cambridge university press, 1985.

\bibitem[Bon09]{bonato2009search}
Anthony Bonato.
\newblock The search for ne. c. graphs.
\newblock {\em Contributions to Discrete Mathematics}, 4(1), 2009.

\bibitem[BT81]{bollobas1981graphs}
B{\'e}la Bollob{\'a}s and Andrew Thomason.
\newblock Graphs which contain all small graphs.
\newblock {\em European Journal of Combinatorics}, 2(1):13--15, 1981.

\bibitem[Bur62]{burgess1962character}
DA~Burgess.
\newblock On character sums and primitive roots.
\newblock {\em Proceedings of the London Mathematical Society}, 3(1):179--192,
  1962.

\bibitem[Cam97]{cameron1997random}
Peter~J Cameron.
\newblock The random graph.
\newblock In {\em The Mathematics of Paul Erd{\H{o}}s II}, pages 333--351.
  Springer, 1997.

\bibitem[CEV85]{caccetta1985property}
Louis Caccetta, Paul Erdos, and Kaipillil Vijayan.
\newblock A property of random graphs.
\newblock {\em Ars Combin}, 19:287--294, 1985.

\bibitem[CG91]{chung1991quasi}
Fan~RK Chung and Ronald~L Graham.
\newblock Quasi-random set systems.
\newblock {\em Journal of the American Mathematical Society}, 4(1):151--196,
  1991.

\bibitem[Che93]{cherlin1993combinatorial}
Gregory~L Cherlin.
\newblock Combinatorial problems connected with finite homogeneity.
\newblock {\em Contemporary Mathematics}, 131:3--3, 1993.

\bibitem[Cla79]{clapham1979class}
CRJ Clapham.
\newblock A class of self-complementary graphs and lower bounds of some ramsey
  numbers.
\newblock {\em Journal of Graph Theory}, 3(3):287--289, 1979.

\bibitem[ER63]{erdHos1963asymmetric}
Paul Erd{\H{o}}s and Alfr{\'e}d R{\'e}nyi.
\newblock Asymmetric graphs.
\newblock {\em Acta Mathematica Hungarica}, 14(3-4):295--315, 1963.

\bibitem[Fag76]{fagin1976probabilities}
Ronald Fagin.
\newblock Probabilities on finite models 1.
\newblock {\em The Journal of Symbolic Logic}, 41(1):50--58, 1976.

\bibitem[FM20]{farber2020random}
Michael Farber and Lewis Mead.
\newblock Random simplicial complexes in the medial regime.
\newblock {\em Topology and its Applications}, page 107065, 2020.

\bibitem[FMNar]{farber2019lower}
Michael Farber, Lewis Mead, and Tahl Nowik.
\newblock Random simplicial complexes, duality and the critical dimension.
\newblock {\em Journal of Topology and Analysis}, to appear.

\bibitem[FMS19]{farber2019rado}
Michael Farber, Lewis Mead, and Lewin Strauss.
\newblock The rado simplicial complex.
\newblock {\em arXiv preprint arXiv:1912.02515}, 2019.

\bibitem[Gos10]{gosselin2010vertex}
Shonda Gosselin.
\newblock Vertex-transitive self-complementary uniform hypergraphs of prime
  order.
\newblock {\em Discrete mathematics}, 310(4):671--680, 2010.

\bibitem[GS71]{graham1971constructive}
Ronald~L Graham and Joel~H Spencer.
\newblock A constructive solution to a tournament problem.
\newblock {\em Canadian Mathematical Bulletin}, 14(1):45--48, 1971.

\bibitem[GT83]{guldan1983new}
Filip Guldan and Pavel Tomasta.
\newblock New lower bounds of some diagonal ramsey numbers.
\newblock {\em Journal of Graph Theory}, 7(1):149--151, 1983.

\bibitem[HT89]{haviland1989pseudo}
Julie Haviland and Andrew Thomason.
\newblock Pseudo-random hypergraphs.
\newblock In {\em Annals of Discrete Mathematics}, volume~43, pages 255--278.
  Elsevier, 1989.

\bibitem[IK04]{iwaniec2004analytic}
Henryk Iwaniec and Emmanuel Kowalski.
\newblock {\em Analytic number theory}, volume~53.
\newblock American Mathematical Soc., 2004.

\bibitem[KM75]{kleitman1975}
D.~Kleitman and G.~Markowsky.
\newblock On dedekind's problem: The number of isotone boolean functions. ii.
\newblock {\em Transactions of the American Mathematical Society},
  213:373--390, 1975.

\bibitem[KM19]{kozlov2019quantitative}
Dmitry~N Kozlov and Roy Meshulam.
\newblock Quantitative aspects of acyclicity.
\newblock {\em Research in the Mathematical Sciences}, 6(4):33, 2019.

\bibitem[Koc92]{kocay1992reconstructing}
William Kocay.
\newblock Reconstructing graphs as subsumed graphs of hypergraphs, and some
  self-complementary triple systems.
\newblock {\em Graphs and Combinatorics}, 8(3):259--276, 1992.

\bibitem[KP04]{kisielewicz2004pseudo}
Andrzej Kisielewicz and Wojciech Peisert.
\newblock Pseudo-random properties of self-complementary symmetric graphs.
\newblock {\em Journal of Graph Theory}, 47(4):310--316, 2004.

\bibitem[LLP09]{li2009homogeneous}
Cai~Heng Li, Tian~Khoon Lim, and Cheryl~E Praeger.
\newblock Homogeneous factorisations of complete graphs with edge-transitive
  factors.
\newblock {\em Journal of Algebraic Combinatorics}, 29(1):107--132, 2009.

\bibitem[LM15]{lenz2015poset}
John Lenz and Dhruv Mubayi.
\newblock The poset of hypergraph quasirandomness.
\newblock {\em Random Structures \& Algorithms}, 46(4):762--800, 2015.

\bibitem[LN97]{lidl1997finite}
Rudolf Lidl and Harald Niederreiter.
\newblock {\em Finite fields}, volume~20.
\newblock Cambridge university press, 1997.

\bibitem[Lov12]{lovasz2012large}
L{\'a}szl{\'o} Lov{\'a}sz.
\newblock {\em Large networks and graph limits}, volume~60.
\newblock American Mathematical Soc., 2012.

\bibitem[May13]{JamesMaynard2013}
James Maynard.
\newblock On the brun-titchmarsh theorem.
\newblock {\em Acta Arithmetica}, 157(3):249--296, 2013.

\bibitem[Pal33]{paley1933orthogonal}
Raymond~EAC Paley.
\newblock On orthogonal matrices.
\newblock {\em Journal of Mathematics and Physics}, 12(1-4):311--320, 1933.

\bibitem[Pei01]{peisert2001all}
Wojciech Peisert.
\newblock All self-complementary symmetric graphs.
\newblock {\em Journal of Algebra}, 240(1):209--229, 2001.

\bibitem[P{\v{S}}09]{potovcnik2009vertex}
Primo{\v{z}} Poto{\v{c}}nik and Mateja {\v{S}}ajna.
\newblock Vertex-transitive self-complementary uniform hypergraphs.
\newblock {\em European Journal of Combinatorics}, 30(1):327--337, 2009.

\bibitem[Rad64]{rado1964universal}
Richard Rado.
\newblock Universal graphs and universal functions.
\newblock {\em Acta Arithmetica}, 4(9):331--340, 1964.

\bibitem[Sch76]{schmidt1976equations}
Wolfgang~M Schmidt.
\newblock {\em Equations over finite fields an elementary approach}.
\newblock Springer, 1976.

\bibitem[SLLL02]{su2002lower}
Wenlong Su, Qiao Li, Haipeng Luo, and Guiqing Li.
\newblock Lower bounds of ramsey numbers based on cubic residues.
\newblock {\em Discrete Mathematics}, 250(1-3):197--209, 2002.

\bibitem[Spe93]{spencer1993zero}
Joel Spencer.
\newblock Zero-one laws with variable probability.
\newblock {\em The Journal of symbolic logic}, 58(1):1--14, 1993.

\bibitem[Win93]{winkler1993random}
P~Winkler.
\newblock Random structures and zero-one laws, finite and infinite
  combinatorics in sets and logic, nw sauer, re woodrow and b. sands, eds.,
  nato advanced science institutes series, 1993.

\end{thebibliography}
}

\end{document}